\theoremstyle{plain}
\newtheorem{theorem}{Theorem}
\newtheorem{lemma}[theorem]{Lemma}
\newtheorem{proposition}[theorem]{Proposition}
\newtheorem{remark}[theorem]{Remark}
\newtheorem{definition}[theorem]{Definition}
\newtheorem{corollary}[theorem]{Corollary}
\newtheorem*{theoremA1}{Theorem A1}
\newtheorem*{theoremA2}{Theorem A2}
\newtheorem*{theoremA3}{Theorem A3}
\newtheorem*{theoremG1}{Theorem G1}
\newtheorem*{theoremG2}{Theorem G2}
\newtheorem*{theoremG3}{Theorem G3}
\begin{document}
 
 \bigskip

\noindent{\Large
The algebraic and geometric classification of\\  noncommutative Jordan algebras}\footnote{The authors thank Amir Fernández Ouaridi for sharing some useful software programs.}\footnote{
The  first part of the work is supported by 
FCT   UIDB/00212/2020 and UIDP/00212/2020;
grant FZ-202009269, Ministry of Higher Education, Science and Innovations of the Republic of Uzbekistan.
The second part of this work is supported by the Russian Science Foundation under grant 22-11-00081.
}

 \bigskip

\begin{center}

 {\bf
Hani Abdelwahab\footnote{Department of Mathematics, 
Faculty of Science, Mansoura University,  Mansoura, Egypt; \ haniamar1985@gmail.com},
Kobiljon Abdurasulov\footnote{CMA-UBI, University of  Beira Interior, Covilh\~{a}, Portugal;  \ 
Saint Petersburg  University, Russia; \ Institute of Mathematics Academy of
Sciences of Uzbekistan, Tashkent, Uzbekistan; \ abdurasulov0505@mail.ru} 
\&   
Ivan Kaygorodov
\footnote{CMA-UBI, University of  Beira Interior, Covilh\~{a}, Portugal; \    kaygorodov.ivan@gmail.com}

}

\end{center}

\ 

\noindent {\bf Abstract:}
{\it  
In this paper, we develop a method to obtain the algebraic classification of
noncommutative Jordan algebras from the classification of Jordan algebras of the same dimension.
We use this method to obtain the algebraic classification of complex $3$-dimensional noncommutative Jordan algebras. 
As a byproduct, we obtain the classification
of complex $3$-dimensional Kokoris, standard,
generic Poisson, and generic Poisson--Jordan algebras;
and also complex $4$-dimensional nilpotent Kokoris and standard algebras. 
In addition, we consider the geometric
classification of   varieties of cited algebras,
that is the description of its irreducible components. 
 }

 \bigskip 

\noindent {\bf Keywords}:
{\it 
noncommutative Jordan algebra,
generic Poisson algebra,
algebraic  classification,
geometric classification.}

\bigskip 

 \
 
\noindent {\bf MSC2020}:  
17A30 (primary);
17A15,
17C55,
17B63,
14L30 (secondary).

	 \bigskip

  


\section*{ Introduction}
  
The  algebraic classification (up to isomorphism) of algebras of small dimensions from a certain variety defined by a family of polynomial identities is a classic problem in the theory of non-associative algebras. 
Another interesting approach to studying algebras of a fixed dimension is to study them from a geometric point of view (that is, to study the degenerations and deformations of these algebras). The results in which the complete information about degenerations of a certain variety is obtained are generally referred to as the geometric classification of the algebras of these varieties. There are many results related to the algebraic and geometric classification of 
Jordan, Lie, Leibniz, Zinbiel, and other algebras 
(see,  \cite{     BC99, kppv,               EM22, BT22,    gkk,  GRH, ha16,           ikv18,       ikv20,              fkkv} and references in \cite{k23,l24,MS}). 
 The geometric classification of algebras from a certain variety is based on the notion of degeneration, that is a ''dual'' notion to deformations \cite{b,c}. 

Noncommutative Jordan algebras were introduced by Albert in \cite{Alb}. He noted that the structure theories of alternative and Jordan algebras share so many nice properties that it is natural to conjecture that these algebras are members of a more general class with a similar theory. So he introduced the variety of noncommutative Jordan algebras defined by the Jordan identity and the flexibility identity. 
Namely, the variety of noncommutative Jordan algebras is defined by the following identities:
\[
\begin{array}{rcl}
x(yx) &=&  (xy)x,\\
x^2(yx) &=&  (x^2y)x\end{array} \]
The class of noncommutative Jordan algebras turned out to be vast: for example, apart from alternative and Jordan algebras, it contains quasi-associative and quasi-alternative algebras, 
quadratic flexible algebras and all anticommutative algebras. However, the structure theory of this class is far from being nice.
Nevertheless, certain progress was made in the study of the structure theory of noncommutative Jordan algebras and superalgebras 
 (see, for example \cite{ccr22,Sch,McC,McC2,KLP17,popov,spa,k58,k60,O64,p2v,ps19, sh71,pcm}). 
So, 
Schafer gave the first structure theory for noncommutative Jordan algebras of characteristic zero in \cite{Sch}. Kleinfeld and  Kokoris proved that 
a simple, flexible, power-associative algebra of finite dimension over a field of characteristic zero is a noncommutative Jordan algebra  \cite{Kleinfeld}.
A coordinatization theorem for noncommutative Jordan algebras was obtained in a paper by McCrimmon \cite{McC}.
Later, 
McCrimmon shows that the property of being a noncommutative Jordan algebra is preserved under homotopy \cite{McC2}.
Noncommutative Jordan algebras with additional identity 
$([x,y],y,y)=0$ were studied in papers by Shestakov and Schafer \cite{sh71,S94}.
Strongly prime noncommutative Jordan algebras were studied by 
Skosyrskiĭ \cite{sk89,sk91}.
A connection between  noncommutative Jordan algebras and  $(-1,-1)$-balanced Freudenthal Kantor triple systems was established by 
Elduque,   Kamiya, and Okubo \cite{EKO}. Cabrera Serrano,  Cabrera García, and   Rodríguez Palacios studied 
the algebra of multiplications of prime and semiprime noncommutative Jordan algebras \cite{spa, ccr22}. Jumaniyozov, Kaygorodov, and  Khudoyberdiyev classified complex $4$-dimensional nilpotent noncommutative Jordan algebras \cite{ikv18}.
Connections between noncommutative Jordan algebras and conservative algebras are discussed in \cite{pcm}.
There are also some results in noncommutative Jordan superalgebras. Namely, Pozhidaev and Shestakov classified all simple finite-dimensional 
noncommutative Jordan superalgebras in \cite{ps19};
Kaygorodov, Popov, and Lopatin studied the structure of simple noncommutative Jordan superalgebras \cite{KLP17};
Popov described representations of simple noncommutative Jordan superalgebras in \cite{popov, p2v}.

The main goal of the present paper is to obtain the algebraic and geometric description of the variety of complex $3$-dimensional noncommutative Jordan algebras. To do so, we first determine all such $3$-dimensional algebra structures, up to isomorphism (what we call the algebraic classification), and then proceed to determine the geometric properties of the corresponding variety, namely its dimension and description of the irreducible components (the geometric classification). 
As some corollaries, we have the algebraic and geometric classification of complex $3$-dimensional Kokoris and standard algebras.

\medskip 

Our main results are summarized below.

\begin{theoremA1}
There are infinitely many isomorphism classes of   complex 
$3$-dimensional  noncommutative Jordan algebras, described explicitly in 
    Theorems \ref{asscom},\ref{3-dim Jordan},\ref{3ant} and \ref{3-dim noncommutative Jordan}
    in terms of 
$9$ one-parameter families, 
$1$ two-parameter family 
and 
$23$ additional isomorphism classes.
There are only $18$ non-isomorphic complex $4$-dimensional nilpotent (non-$2$-step nilpotent)
noncommutative Jordan algebras listed in Theorem \ref{nilp4nj}.

\end{theoremA1}

\begin{theoremA2}
There are infinitely many isomorphism classes of   complex 
$3$-dimensional Kokoris  algebras, 
described explicitly in Theorem \ref{3-dim Kokoris algebra}  in terms of 
$3$ one-parameter families
and 
$19$ additional isomorphism classes.
There are only $10$ non-isomorphic complex $4$-dimensional nilpotent (non-$2$-step nilpotent)
Kokoris algebras listed in Theorem \ref{koknilp}.
\end{theoremA2}

\begin{theoremA3}
There are infinitely many isomorphism classes of   complex 
$3$-dimensional standard  algebras, 
described explicitly in Theorem \ref{teostan}  in terms of 
$1$ one-parameter family
and 
$32$ additional isomorphism classes.
There are only $18$ non-isomorphic complex $4$-dimensional nilpotent (non-$2$-step nilpotent)
standard   algebras listed in Theorem \ref{nilp4nj}.
\end{theoremA3}

From the geometric point of view, in many cases, the irreducible components of the variety are determined by the rigid algebras, i.e., algebras whose orbit closure is an irreducible component. It is worth mentioning that this is not always the case and already in \cite{f68}, Flanigan had shown that the variety of complex $3$-dimensional nilpotent associative algebras has an irreducible component which does not contain any rigid algebra  --- it is instead defined by the closure of a union of a one-parameter family of algebras. Here, we encounter a different situation. Informally, although Theorems~G1, ~G2 and~G3 show that there are   {\it generic} algebras and 
  {\it generic} parametric families in the variety of $3$-dimensional  
  noncommutative Jordan, Kokoris, and standard algebras. 

\begin{theoremG1}
The variety of complex $3$-dimensional   noncommutative Jordan algebras has dimension $9$. 
It is defined by  $4$ rigid algebras and three one-parametric families of algebras and can be described as the closure of the union of $\mathrm{GL}_3(\mathbb{C})$-orbits of the algebras given in Theorem \ref{geo3}.
The variety of complex $4$-dimensional nilpotent noncommutative Jordan algebras has dimension $14$. 
It is defined by  $3$ rigid algebras and two one-parametric families of algebras and can be described as the closure of the union of $\mathrm{GL}_4(\mathbb{C})$-orbits of the algebras given in \cite[Theorem B]{ikv18}.
\end{theoremG1}

\begin{theoremG2}
The variety of complex $3$-dimensional   Kokoris algebras has dimension $9$. 
It is defined by  $3$ rigid algebras and two one-parametric families of algebras and can be described as the closure of the union of $\mathrm{GL}_3(\mathbb{C})$-orbits of the algebras given in Theorem \ref{geo1}.
The variety of complex $4$-dimensional nilpotent Kokoris algebras has dimension $13$. 
It is defined by  $3$ rigid algebras and two one-parametric families of algebras and can be described as the closure of the union of $\mathrm{GL}_4(\mathbb{C})$-orbits of the algebras given in Theorem \ref{geo1nilp}.
\end{theoremG2}

\begin{theoremG3}
The variety of complex $3$-dimensional   standard   algebras has dimension $9$. 
It is defined by  $13$ rigid algebras and one one-parametric family of algebras and can be described as the closure of the union of $\mathrm{GL}_3(\mathbb{C})$-orbits of the algebras given in Theorem \ref{geo2}.
The variety of complex $4$-dimensional nilpotent standard    algebras has dimension $14$. 
It is defined by  $3$ rigid algebras and two one-parametric families of algebras and can be described as the closure of the union of $\mathrm{GL}_4(\mathbb{C})$-orbits of the algebras given in \cite[Theorem B]{ikv18}.
\end{theoremG3}

 \section{The algebraic classification of non-associative algebras}

In our paper, we are working with finite-dimensional vector spaces over the complex field.

\subsection{The  classification method of noncommutative Jordan algebras}

Let $({\bf A}, \cdot)$ be an algebra. We consider the following two new products
on the underlying vector space ${\bf A}$ defined by 
\begin{center}
$x\circ y :=\frac{1}{2}(x\cdot y+y\cdot x), \hspace{2cm} [x,y] :=\frac{1}{2}%
(x\cdot y-y\cdot x).$
\end{center}
Let us denote $\textbf{A}^+:= (\textbf{A}, \circ),$ \ 
$\textbf{A}^-:=(\textbf{A},
[\cdot,\cdot])$ 
and the associator as $(x,y,z):=(x\cdot y)\cdot z-x\cdot (y\cdot z).$

Recall that an algebra $({\bf A}, \cdot)$ is called a  {noncommutative Jordan algebra}
if it satisfies the identities: 
\begin{equation}
(x , y, x)  = 0,  \label{flexible}
\end{equation}
\begin{equation}
(x\cdot x,   y,   x) = 0.  \label{Jordan}
\end{equation}

Identity (\ref{flexible}) is called the flexibility identity. A flexible algebra 
${\bf A}$ satisfies the Jordan identity (\ref{Jordan})  if and only if
the corresponding   algebra $\textbf{A}^{+}$ is a Jordan algebra
(i.e.,  a commutative algebra with identity (\ref{Jordan})). The class of noncommutative Jordan algebras is extremely large. It contains
all Jordan and alternative algebras, as well as all anticommutative
algebras.

\begin{definition}
An algebra $({\bf P}, \circ, [\cdot,\cdot])$ is called  a  generic Poisson--Jordan  algebra (resp., generic Poisson  algebra)\footnote{To the best of our knowledge, such “associative-commutative”  algebras were introduced independently by Cannas and Weinstein  under the name “almost Poisson algebras” and by Shestakov  under the name “general Poisson algebras” (later changed into “generic Poisson algebras” in 
 a paper by  Kolesnikov,   Makar-Limanov, and Shestakov). We will use the last terminology.} if 
  $({\bf P}, \circ)$ is a Jordan (resp., associative commutative) algebra, 
   $({\bf P},  [\cdot,\cdot])$ is an anticommutative algebra and 
these two operations are required to satisfy the following   identity: 
\begin{equation*}
[ x\circ y,z] =[ x,z] \circ y+x\circ [ y,z].
\end{equation*}%

\end{definition}

The class of generic Poisson--Jordan algebras is extremely extensive. It contains all Poisson algebras, Malcev--Poisson algebras, and Malcev--Poisson--Jordan algebras, as well as all generic Poisson algebras.

\begin{proposition}
$({\bf A},\cdot) $ is a noncommutative Jordan algebra if and only
if $({\bf A},\circ ,[\cdot,\cdot])$ is a generic Poisson--Jordan  algebra.
\end{proposition}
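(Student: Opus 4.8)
The plan is to exploit the decomposition $x\cdot y = x\circ y + [x,y]$, in which $\circ$ is automatically commutative and $[\cdot,\cdot]$ is automatically anticommutative, so the requirement that $(\mathbf{A},[\cdot,\cdot])$ be anticommutative costs nothing. The paragraph preceding the statement already records that, \emph{for a flexible algebra}, identity \eqref{Jordan} holds if and only if $\mathbf{A}^+$ is a Jordan algebra. I would therefore isolate the genuine content of the proposition as a single claim valid for an arbitrary algebra $(\mathbf{A},\cdot)$: the flexibility identity \eqref{flexible} is equivalent to the generic Poisson--Jordan compatibility identity
\[
[x\circ y,z] = [x,z]\circ y + x\circ[y,z].
\]
Granting this claim, both implications follow formally. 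If $\mathbf{A}$ is noncommutative Jordan then it is flexible (hence satisfies compatibility) and, being flexible and satisfying \eqref{Jordan}, has $\mathbf{A}^+$ Jordan; conversely, compatibility yields flexibility, and then flexibility together with $\mathbf{A}^+$ Jordan returns \eqref{Jordan} by the cited fact.

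To prove the claim I would run two short computations in terms of $\circ$ and $[\cdot,\cdot]$. First, substituting $x\cdot y = x\circ y + [x,y]$ and $y\cdot x = x\circ y - [x,y]$ into the associator, a direct expansion collapses most terms and gives
\[
(x,y,x) = 2\bigl([x\circ y,x] + [x,y]\circ x\bigr).
\]
This immediately handles the direction compatibility $\Rightarrow$ flexibility: setting $z=x$ in the compatibility identity and using $[x,x]=0$ together with the commutativity of $\circ$ yields $[x\circ y,x] = -[x,y]\circ x$, whence $(x,y,x)=0$.

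For the reverse direction I would expand the compatibility defect $[x\circ y,z] - [x,z]\circ y - x\circ[y,z]$ back in terms of the original product $\cdot$. After collecting the twelve resulting monomials into associators, this defect equals
\[
\tfrac14\bigl((x,y,z)+(y,x,z)+(z,x,y)+(z,y,x)-(x,z,y)-(y,z,x)\bigr).
\]
Now flexibility, in its linearized form $(a,b,c)+(c,b,a)=0$ obtained by replacing $x$ with $x+z$ in \eqref{flexible}, pairs these six associators as $\{(x,y,z),(z,y,x)\}$, $\{(y,x,z),(z,x,y)\}$ and $\{(x,z,y),(y,z,x)\}$, each pair summing to zero; thus the defect vanishes and compatibility holds. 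Combining the two computations gives flexibility $\Leftrightarrow$ compatibility, and the proposition follows.

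The argument is entirely formal, so there is no deep obstacle; the one point demanding care is the reverse direction. It is tempting to expect the (manifestly non-symmetric) compatibility identity to be strictly stronger than flexibility, and a naive substitution will not produce it from $(x,y,x)=0$. The resolution is to pass through the full linearization $(a,b,c)=-(c,b,a)$ and the associator rewriting above, which shows that the single compatibility identity is in fact equivalent to flexibility for \emph{any} algebra. The remaining work---keeping the twelve monomials and their signs straight while folding them into the six associators---is the only place where a slip is easy to make.
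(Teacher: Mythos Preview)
Your proposal is correct and follows the same approach as the paper: the paper's proof is the one-line assertion that the flexible law is equivalent to $[x\circ y,z]=x\circ[y,z]+y\circ[x,z]$, and you have simply supplied the details of that equivalence via the associator expansion and the linearized flexibility $(a,b,c)+(c,b,a)=0$.
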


\begin{proof}
Since the flexible law is equivalent to $[x\circ y,z] = x\circ [y,z] +
y\circ [x,z]$, the proof is finished.
\end{proof}

\begin{definition}
\label{def_homomorphism}\textrm{Let $({\bf P}_1,\circ_1,[\cdot,\cdot]_1)$
and $({\bf P}_2,\circ_2,[\cdot,\cdot]_2)$ be two
algebras. A   linear map $\phi :%
{\bf P}_1 \to {\bf P}_2$ is a {homomorphism} if it is preserving the products, that is, 
\begin{equation*}
\phi (x\circ_1 y) =\phi(x) \circ_2 \phi(y), \hspace{2cm} \phi([x,y]_1) =
[\phi(x),\phi(y)]_2.
\end{equation*}
 }
\end{definition}

Let $({\bf A}_1,\cdot_1)$ and $({\bf A}_2,\cdot_2)$ be two
noncommutative Jordan algebras and let $({\bf A}_1,\circ_1,[\cdot,%
\cdot]_1)$ and $({\bf A}_2,\circ_2,[\cdot,\cdot]_2)$ be its associated
generic Poisson--Jordan algebras. If $({\bf A}_1,\cdot_1)$ and $({\bf A}_2,\cdot_2)$
are isomorphic, then  it is easy to see that 
the generic Poisson--Jordan algebras $({\bf A}_1,\circ_1,[\cdot,\cdot]_1)$ and $({\bf A}_2,\circ_2,[\cdot, \cdot]_2)$ are isomorphic. Conversely, we can show that if the generic
Poisson--Jordan algebras $({\bf A}_1,\circ_1,[\cdot,\cdot]_1)$ and $(
{\bf A}_2,\circ_2,[\cdot,\cdot]_2)$ are isomorphic, then the
noncommutative Jordan algebras $({\bf A}_1,\cdot_1)$ and $({\bf A}%
_2,\cdot_2)$ are isomorphic. So we have the following result:

\begin{proposition}
Every generic Poisson--Jordan  algebra 
(resp., generic Poisson algebra) $({\bf A},\circ, [\cdot,\cdot])$ is
associated with precisely one noncommutative Jordan (resp., Kokoris\footnote{About Kokoris algebras, see subsection \ref{Kokoris}}) algebra $({\bf A}%
,\cdot)$. That is, we
have a bijective correspondence between generic Poisson--Jordan (resp., generic Poisson) algebras
  and
noncommutative Jordan (resp., Kokoris) algebras.
\end{proposition}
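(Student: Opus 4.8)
The plan is to produce an explicit two-sided inverse to the assignment $(\mathbf{A},\cdot)\mapsto(\mathbf{A},\circ,[\cdot,\cdot])$, the key being the polarization identity
\[
x\cdot y = x\circ y + [x,y],
\]
which is immediate from the definitions of $\circ$ and $[\cdot,\cdot]$. First I would use this identity to settle uniqueness: if two products $\cdot_1$ and $\cdot_2$ on $\mathbf{A}$ give rise to the same $\circ$ and the same $[\cdot,\cdot]$, then $x\cdot_1 y = x\circ y+[x,y]=x\cdot_2 y$ for all $x,y$, so the noncommutative Jordan algebra associated to a given generic Poisson--Jordan algebra is unique.

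To build the inverse map, I would start from an arbitrary generic Poisson--Jordan algebra $(\mathbf{A},\circ,[\cdot,\cdot])$ and define a bilinear product by $x\cdot y := x\circ y+[x,y]$. The next step is to check that $\circ$ and $[\cdot,\cdot]$ are recovered as the symmetric and antisymmetric parts of $\cdot$: using commutativity of $\circ$ and anticommutativity of $[\cdot,\cdot]$,
\[
\tfrac{1}{2}(x\cdot y + y\cdot x)=x\circ y,\qquad \tfrac{1}{2}(x\cdot y - y\cdot x)=[x,y].
\]
By the preceding Proposition, since $(\mathbf{A},\circ,[\cdot,\cdot])$ is a generic Poisson--Jordan algebra, the algebra $(\mathbf{A},\cdot)$ is automatically a noncommutative Jordan algebra; hence the inverse map is well defined and lands in the correct class. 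Together with the polarization identity this shows the two assignments are mutually inverse, which gives the asserted bijection at the level of algebra structures on the fixed vector space $\mathbf{A}$.

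It then remains to observe that the correspondence is compatible with isomorphisms, so that it descends to isomorphism classes. As already noted in the discussion preceding the statement, a linear map $\phi$ preserves $\cdot$ if and only if it preserves both $\circ$ and $[\cdot,\cdot]$ --- one direction follows from linearity of $\phi$ together with the definitions of $\circ$ and $[\cdot,\cdot]$, and the other from the reconstruction $x\cdot y=x\circ y+[x,y]$ --- so $\phi$ is an isomorphism of noncommutative Jordan algebras precisely when it is an isomorphism of the associated generic Poisson--Jordan algebras. The Kokoris and generic Poisson case is handled word for word, with the requirement that the commutative product $\circ$ be a Jordan product replaced throughout by the requirement that $\circ$ be associative and commutative.

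I do not expect a genuine obstacle here: the whole argument rests on the single linear identity $x\cdot y=x\circ y+[x,y]$ together with the structural equivalence already established in the previous Proposition. The only point that calls for a moment's care is verifying that the reconstructed product has exactly $\circ$ and $[\cdot,\cdot]$ as its symmetric and antisymmetric components, which is precisely where commutativity of $\circ$ and anticommutativity of $[\cdot,\cdot]$ are used.
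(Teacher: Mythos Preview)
Your proposal is correct and follows essentially the same approach as the paper. The paper does not give a self-contained proof of this proposition; instead it relies on the preceding proposition (the ``if and only if'' characterization) together with the paragraph just before the statement, which sketches exactly the isomorphism-compatibility argument you give, while the reconstruction formula $x\cdot y = x\circ y + [x,y]$ is left implicit. Your write-up simply makes all of this explicit.
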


\begin{definition}
\textrm{Let $({\bf A},\circ)$ be a Jordan algebra. Let ${\bf Z}^2({\bf A},%
{\bf A})$ be the set of all skew symmetric bilinear maps $\theta :%
{\bf A}\times {\bf A} \to {\bf A}$ such that 
\begin{equation*}
\theta(x\circ y,z) =\theta(x,z) \circ y +x\circ \theta(y,z).
\end{equation*} }
\end{definition}

\noindent  For $\theta \in {\bf Z}^2(%
{\bf A},{\bf A})$ we define on ${\bf A}$ a product $%
[\cdot,\cdot]_{\theta} :{\bf A}\times {\bf A}\to {\bf A}$ by 
\begin{equation}
[x,y]_{\theta} := \theta(x,y).  \label{producttheta}
\end{equation}

\begin{lemma}
Let $({\bf A},\circ)$ be a Jordan algebra and $\theta \in {\bf Z}^2({\bf A}%
,{\bf A})$. Then $( {\bf A},\circ,[\cdot,\cdot]_{\theta})$ is a
generic Poisson--Jordan  algebra endowed with the product defined in \eqref{producttheta}
and $( {\bf A},\cdot_{\theta})$ is a
noncommutative Jordan algebra, 
where $x\cdot_{\theta} y := x \circ y + [x,y]_{\theta}.$ 

\end{lemma}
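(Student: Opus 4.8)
The plan is to establish the two assertions in sequence, obtaining the second as a direct consequence of the first together with the equivalence recorded in the Proposition above (noncommutative Jordan $\Leftrightarrow$ generic Poisson--Jordan). The entire argument is a formal unpacking of the definitions, so no genuine obstacle arises; the only computation of note is the decomposition of $\cdot_\theta$ into its symmetric and skew parts, which is what aligns the construction with the stated correspondence.

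For the first assertion, I would verify the three conditions in the definition of a generic Poisson--Jordan algebra for the triple $(\mathbf{A}, \circ, [\cdot,\cdot]_\theta)$. The product $\circ$ is a Jordan product by hypothesis. The bracket $[\cdot,\cdot]_\theta = \theta$ is anticommutative because $\theta$ is skew-symmetric, so $[x,y]_\theta = \theta(x,y) = -\theta(y,x) = -[y,x]_\theta$. Finally, the compatibility identity
\begin{equation*}
[x \circ y, z]_\theta = [x,z]_\theta \circ y + x \circ [y,z]_\theta
\end{equation*}
is, upon substituting $[\cdot,\cdot]_\theta = \theta$, exactly the defining relation $\theta(x \circ y, z) = \theta(x,z) \circ y + x \circ \theta(y,z)$ that places $\theta$ in $\mathbf{Z}^2(\mathbf{A}, \mathbf{A})$. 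Hence $(\mathbf{A}, \circ, [\cdot,\cdot]_\theta)$ is a generic Poisson--Jordan algebra.

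For the second assertion, I would compute the commutative and anticommutative parts of the product $\cdot_\theta$. Using $y \cdot_\theta x = y \circ x + [y,x]_\theta = x \circ y - [x,y]_\theta$, which follows from commutativity of $\circ$ and anticommutativity of $[\cdot,\cdot]_\theta$, one finds
\begin{equation*}
\tfrac{1}{2}\bigl(x \cdot_\theta y + y \cdot_\theta x\bigr) = x \circ y, \qquad \tfrac{1}{2}\bigl(x \cdot_\theta y - y \cdot_\theta x\bigr) = [x,y]_\theta.
\end{equation*}
Thus the symmetrization and skew-symmetrization of $(\mathbf{A}, \cdot_\theta)$ recover precisely $(\mathbf{A}, \circ)$ and $(\mathbf{A}, [\cdot,\cdot]_\theta)$, that is, $(\mathbf{A}, \cdot_\theta)^{+} = (\mathbf{A}, \circ)$ and $(\mathbf{A}, \cdot_\theta)^{-} = (\mathbf{A}, [\cdot,\cdot]_\theta)$. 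Since the latter triple was just shown to be a generic Poisson--Jordan algebra, the Proposition above applies and yields at once that $(\mathbf{A}, \cdot_\theta)$ is a noncommutative Jordan algebra, completing the proof.
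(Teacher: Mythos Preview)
Your proof is correct and complete. The paper states this lemma without proof, since it follows immediately from the definitions and the preceding Proposition; your argument makes explicit exactly the verification the authors leave to the reader.
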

In the reverse direction, if $({\bf A},\circ ,[\cdot,\cdot])$ is a
generic Poisson--Jordan  algebra, then there exists $\theta \in {\bf Z}^2({\bf A}%
, {\bf A})$ such that $({\bf A},\circ , [\cdot,\cdot]_{\theta}) $
and $({\bf A},\circ ,[\cdot,\cdot])$ are isomorphic. Indeed, consider
the skew symmetric bilinear map $\theta :{\bf A} \times {\bf A} \to 
{\bf A}$ defined by $\theta(x,y) := [x,y].$ 
 Then $\theta \in {\bf Z}^2({\bf A},{\bf A})$ and $({\bf A}, \circ ,
[\cdot,\cdot]_{\theta}) = ({\bf A},\circ , [\cdot,\cdot])$.

\medskip

Now, let $({\bf A},\circ)$ be a Jordan algebra and $\mathrm{{Aut}(%
{\bf A})}$ be the automorphism group of ${\bf A}$ with respect to
product $\circ$. Then $\mathrm{{Aut}({\bf A})}$ acts on ${\bf Z}^2({\bf A},%
{\bf A})$ by 
\begin{equation*}
(\theta \ast \phi)(x,y) := \phi^{-1}\bigl(\theta \bigl(\phi(x),\phi(y) \bigr)%
\bigr),
\end{equation*}
where $\phi \in \mathrm{{Aut}({\bf A})}$ and $\theta \in {\bf Z}^2({\bf A}%
, {\bf A})$.

\begin{lemma}
Let $({\bf A},\circ)$ be a Jordan algebra and $\theta, \vartheta \in {\bf Z}^2(%
{\bf A},{\bf A})$. Then the generic Poisson--Jordan algebras $(%
{\bf A},\circ ,[\cdot,\cdot]_{\theta})$ and $({\bf A},\circ
,[\cdot,\cdot]_{\vartheta})$ are isomorphic if and only if there exists $%
\phi \in \mathrm{{Aut}({\bf A})}$ satisfying $\theta \ast \phi
=\vartheta $.
\end{lemma}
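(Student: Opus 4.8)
The plan is to prove both implications directly, taking the relevant automorphism itself as the map that realizes the isomorphism. The key structural observation, which makes the whole statement essentially a matter of unwinding definitions, is that the two generic Poisson--Jordan algebras $({\bf A},\circ,[\cdot,\cdot]_{\theta})$ and $({\bf A},\circ,[\cdot,\cdot]_{\vartheta})$ share the \emph{same} underlying Jordan product $\circ$; they differ only in the anticommutative bracket. Consequently any isomorphism between them must in particular preserve $\circ$ on $({\bf A},\circ)$, i.e. it is automatically an element of $\mathrm{Aut}({\bf A})$, and the entire content reduces to matching the defining conditions of a homomorphism (Definition \ref{def_homomorphism}) against the definition of the action $\theta\ast\phi$.

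For the sufficiency direction I would assume $\theta\ast\phi=\vartheta$ for some $\phi\in\mathrm{Aut}({\bf A})$ and claim that $\phi$ itself is an isomorphism from $({\bf A},\circ,[\cdot,\cdot]_{\vartheta})$ to $({\bf A},\circ,[\cdot,\cdot]_{\theta})$. Since $\phi$ is a Jordan automorphism it preserves $\circ$, so by Definition \ref{def_homomorphism} only the bracket condition remains to be verified. Unwinding the action, $[x,y]_{\vartheta}=\vartheta(x,y)=(\theta\ast\phi)(x,y)=\phi^{-1}\bigl(\theta(\phi(x),\phi(y))\bigr)$; applying $\phi$ to both sides gives $\phi([x,y]_{\vartheta})=\theta(\phi(x),\phi(y))=[\phi(x),\phi(y)]_{\theta}$, which is exactly the homomorphism condition for the bracket. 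As $\phi$ is bijective, it is the required isomorphism.

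For the necessity direction I would let $\psi$ be an isomorphism from $({\bf A},\circ,[\cdot,\cdot]_{\vartheta})$ to $({\bf A},\circ,[\cdot,\cdot]_{\theta})$. Preservation of the commutative product, $\psi(x\circ y)=\psi(x)\circ\psi(y)$, says precisely that $\psi\in\mathrm{Aut}({\bf A})$, using that $\circ$ is the same in both algebras. Preservation of the bracket gives $\psi\bigl(\vartheta(x,y)\bigr)=\theta(\psi(x),\psi(y))$ for all $x,y$; solving for $\vartheta$ yields $\vartheta(x,y)=\psi^{-1}\bigl(\theta(\psi(x),\psi(y))\bigr)=(\theta\ast\psi)(x,y)$, so that $\theta\ast\psi=\vartheta$ with $\phi:=\psi$.

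I do not expect any real obstacle in this lemma: it is purely bookkeeping that aligns the formula $(\theta\ast\phi)(x,y)=\phi^{-1}(\theta(\phi(x),\phi(y)))$ with the two clauses of an isomorphism of generic Poisson--Jordan algebras. The only step that deserves to be stated explicitly is the reduction observation above --- that an isomorphism of the two Poisson--Jordan structures restricts to a Jordan automorphism of $({\bf A},\circ)$ --- which is immediate once one notes the commutative parts coincide.
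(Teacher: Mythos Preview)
Your proof is correct and follows essentially the same approach as the paper: both directions use the candidate automorphism $\phi$ (or $\psi$) itself as the isomorphism, observing that preservation of $\circ$ forces it into $\mathrm{Aut}({\bf A})$ and that the bracket condition unwinds exactly to $\theta\ast\phi=\vartheta$. The only difference is presentational --- you spell out the ``shared Jordan product'' observation more explicitly than the paper does.
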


\begin{proof}
Suppose $\phi : ({\bf A},\circ ,[\cdot,\cdot]_{\vartheta}) \to ({\bf A},\circ , [\cdot,\cdot]_{\theta})$ is an isomorphism of generic 
Poisson--Jordan algebras. Then $\phi \in \text{Aut}({\bf A})$, and by Definition 
\ref{def_homomorphism}, we have $\phi([x,y]_{\vartheta}) =
[\phi(x),\phi(y)]_{\theta}$. That is, $%
\phi(\vartheta(x,y)) = \theta(\phi(x),\phi(y))$.
Hence $\vartheta(x,y) = \phi^{-1}\bigl(\theta\bigl(\phi(x), \phi(y)\bigr)%
\bigr) = (\theta \ast \phi)(x,y)$.

To prove the converse, if $\theta \ast \phi =\vartheta$ then $\phi : (%
{\bf A},\circ ,[\cdot,\cdot]_{\vartheta}) \to ({\bf A},\circ ,
[\cdot,\cdot]_{\theta})$ is an isomorphism since $\phi(\vartheta(x,y)) =
\theta (\phi(x),\phi(y))$.
\end{proof}

Hence, we have a procedure to classify the generic Poisson--Jordan algebras
(and therefore   noncommutative Jordan algebras) associated with a given
Jordan algebra $({\bf A},\circ)$. It consists of three steps:

\begin{enumerate}
\item[{\bf Step} $1.$] Compute ${\bf Z}^2({\bf A},{\bf A})$.

\item[{\bf Step} $2.$] Find the orbits of $\mathrm{{Aut}({\bf A})}$ on ${\bf Z}^2(%
{\bf A},{\bf A})$.

\item[{\bf Step} $3.$] Choose a representative $\theta$ from each orbit and then
construct the generic Poisson--Jordan  algebra $({\bf A},\circ ,
[\cdot,\cdot]_{\theta})$ (the noncommutative Jordan algebra $({\bf A}%
,\cdot_{\theta})$).
\end{enumerate}

Let us introduce the following notations. Let $\{e_1,\dots,e_n\}$ be a fixed basis of a Jordan algebra $({\bf A},\circ)$. Define 
$\mathrm{\Lambda }^2({\bf A},\mathbb{C})$ to be the space of all skew
symmetric bilinear forms on ${\bf A}$, that is, \begin{center}
    $\mathrm{\Lambda}^2(%
{\bf A},\mathbb{C}) := 
\langle \Delta_{i,j} | 1\leq i<j \leq n\rangle$,
\end{center} where $\Delta_{i,j}$ is the skew-symmetric bilinear form $%
\Delta_{i,j}:{\bf A}\times {\bf A} \to \mathbb{C}$ defined by 
\begin{equation*}
\Delta_{i,j}( e_l,e_m) :=\left\{ 
\begin{tabular}{rl}
$1,$ & if $(i,j) = (l,m),$ \\ 
$-1,$ & if $(i,j) = (m,l),$ \\ 
$0,$ & otherwise.%
\end{tabular}
\right.
\end{equation*}
Now, if $\theta \in {\bf Z}^2({\bf A},{\bf A})$ then $\theta$ can be
uniquely written as $\theta (x,y) = \sum_{i=1}^n B_i(x,y)e_i$, where $%
B_1,\dots , B_n$ are skew symmetric bilinear forms on ${\bf A}$. Also,
we may write $\theta = (B_{1},\dots ,B_n)$. Let $\phi^{-1}\in \mathrm{Aut}(%
{\bf A})$ be given by the matrix $( b_{ij})$. If $(\theta \ast
\phi)(x,y) = \sum_{i=1}^n B_i^{\prime }(x,y)e_i$, then $B_i^{\prime }=
\sum_{j=1}^n b_{ij}\phi^t B_j\phi$, whenever $i \in \{1,\dots,n\}$.

\begin{remark}
\label{[1 0]}Let $X=%
\begin{pmatrix}
\alpha & \beta%
\end{pmatrix}%
\in {\mathbb{M}}_{1\times 2}({\mathbb{C}})$ and $X\neq 0$. Then there
exists an invertible matrix $A\in {\mathbb{M}}_{2\times 2}({\mathbb{C}})$
such that $XA=%
\begin{pmatrix}
1 & 0%
\end{pmatrix}%
$. To see this, suppose first that $\alpha \neq 0$. Then $%
\begin{pmatrix}
\alpha & \beta%
\end{pmatrix}%
\begin{pmatrix}
\alpha ^{-1} & -\beta \\ 
0 & \alpha%
\end{pmatrix}%
=%
\begin{pmatrix}
1 & 0%
\end{pmatrix}%
$. Assume now that $\alpha =0$. Then $%
\begin{pmatrix}
0 & \beta%
\end{pmatrix}%
\begin{pmatrix}
0 & 1 \\ 
\beta ^{-1} & 0%
\end{pmatrix}%
=\allowbreak 
\begin{pmatrix}
1 & 0%
\end{pmatrix}%
$.
\end{remark}

\subsection{The algebraic classification of 
noncommutative Jordan algebras}

Let us mention that by $\circ$ we will denote a commutative multiplication 
and by $[\cdot,\cdot]$ we denote an anticommutative multiplication.
All products that can be recuperated by commutativity or anticommutativity will be omitted.

\subsubsection{The algebraic classification of 
complex $3$-dimensional noncommutative Jordan algebras}

\begin{theorem}(see, \cite{gkk})
\label{asscom}Let ${\bf A}$ be a complex $3$-dimensional associative commutative algebra. Then ${\bf A}$ is isomorphic to
one  of the following algebras:

\begin{longtable}{lllllllllll}
${\bf A}_{01} $ & $:$ & $ e_{1}\circ e_{1}=e_{2}$ \\
${\bf A}_{02}^{0}$ & $:$ & $e_{1}\circ e_{2}=e_{3}$ \\
${\bf A}_{03}$ & $:$ & $e_{1}\circ e_{1}=e_{2}$ & $e_{1}\circ e_{2}=e_{3}$\\
${\bf A}_{04} $ & $:$ & $e_{1}\circ e_{1}=e_{1}$ & $ e_{2}\circ e_{2}=e_{2}$ & $e_{3}\circ e_{3}=e_{3}$\\
${\bf A}_{05} $ & $:$ & $e_{1}\circ e_{1}=e_{1}$ & $e_{2}\circ e_{2}=e_{2}$ & $e_{2}\circ e_{3}=e_{3}$\\
${\bf A}_{06}$ & $:$ & $e_{1}\circ e_{1}=e_{1} $ & $e_{1}\circ e_{2}=e_{2}$ & $e_{1}\circ e_{3}=e_{3}$\\
${\bf A}_{07}$ & $:$& $e_{1}\circ e_{1}=e_{1}$ & $e_{1}\circ e_{2}=e_{2}$ & $e_{1}\circ e_{3}=e_{3}$ & $e_{2}\circ e_{2}=e_{3}$\\
${\bf A}_{08}$ & $:$ & $e_{1}\circ e_{1}=e_{1}$ & $e_{2}\circ e_{2}=e_{2}$\\

${\bf A}_{09}$ & $:$ & $e_{1}\circ e_{1}=e_{1}$ & $e_{1}\circ e_{2}=e_{2}$\\

${\bf A}_{10}$ & $:$ & $e_{1}\circ e_{1}=e_{1}$\\

${\bf A}_{11}$ & $:$ & $e_{1}\circ e_{1}=e_{1}$ & $e_{2}\circ e_{2}=e_{3}$\\

\end{longtable}

\end{theorem}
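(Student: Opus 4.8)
This classification is classical and may be quoted directly from \cite{gkk}; nevertheless, the plan for a self-contained proof is to exploit the Wedderburn principal theorem. Writing ${\bf N}=\mathrm{rad}({\bf A})$ for the nilradical, one has ${\bf A}={\bf S}\oplus{\bf N}$ as vector spaces, where ${\bf N}$ is a nilpotent ideal and ${\bf S}$ is a maximal semisimple subalgebra. Since ${\bf A}$ is commutative and $\mathbb{C}$ is algebraically closed, ${\bf S}$ is a product of copies of $\mathbb{C}$, hence ${\bf S}\cong\mathbb{C}^{k}$ is spanned by $k=\dim{\bf S}$ pairwise orthogonal idempotents and $\dim{\bf N}=3-k$. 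The whole argument then splits into the four strata $k\in\{0,1,2,3\}$, which are separated by $\dim{\bf S}$, while within each stratum the representatives are distinguished by the finer invariants $\dim{\bf A}^{2}$, $\dim\mathrm{Ann}({\bf A})$, the Peirce eigenspace dimensions, and the nilpotency index of ${\bf N}$, so pairwise non-isomorphism is immediate.

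For $k=3$ one has ${\bf N}=0$ and ${\bf A}=\mathbb{C}^{3}$, giving ${\bf A}_{04}$. For $k=2$, ${\bf N}=\langle e_{3}\rangle$ with $e_{3}^{2}=0$; writing $e_{i}e_{3}=\lambda_{i}e_{3}$, idempotency of $e_{i}$ forces $\lambda_{i}=\lambda_{i}^{2}\in\{0,1\}$, and associativity together with $e_{1}e_{2}=0$ forbids $\lambda_{1}=\lambda_{2}=1$ simultaneously. This leaves exactly the trivial action ($\lambda_{1}=\lambda_{2}=0$, giving ${\bf A}_{08}$) and the case where $e_{3}$ lies in the $1$-eigenspace of a single idempotent (giving ${\bf A}_{05}$).

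For $k=1$ I would combine the isomorphism type of the $2$-dimensional commutative nilpotent algebra ${\bf N}$---either ${\bf N}^{2}=0$ or ${\bf N}\cong\mathbb{C}[x]/(x^{3})$---with the Peirce action of $e_{1}$ on ${\bf N}$. When ${\bf N}^{2}=0$ the action is a projection, and according to whether its $1$-eigenspace has dimension $2,1,0$ one gets ${\bf A}_{06},{\bf A}_{09},{\bf A}_{10}$; when ${\bf N}\cong\mathbb{C}[x]/(x^{3})$, associativity forces $e_{1}$ to act as $0$ or as the identity on all of ${\bf N}$, giving ${\bf A}_{11}$ and ${\bf A}_{07}$. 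For $k=0$, ${\bf A}$ is itself a $3$-dimensional commutative nilpotent algebra, classified by its nilpotency index and $\dim{\bf A}^{2}$: index $4$ yields ${\bf A}_{03}\cong\mathbb{C}[x]/(x^{4})$; index $3$ forces $\dim{\bf A}^{2}=1$, and the induced symmetric bilinear form ${\bf A}/{\bf A}^{2}\times{\bf A}/{\bf A}^{2}\to{\bf A}^{2}\cong\mathbb{C}$ is classified over $\mathbb{C}$ by its rank, rank $1$ giving ${\bf A}_{01}$ and rank $2$ giving ${\bf A}_{02}^{0}$; the residual possibility ${\bf A}^{2}=0$ is the abelian algebra and is the trivial boundary case.

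The main obstacle is the bookkeeping in the mixed strata $k=1,2$: one must reconcile the associativity constraints that couple the idempotent (Peirce) action to the internal multiplication of ${\bf N}$, and then reduce the surviving structure constants to the listed normal forms using only those automorphisms that respect the decomposition ${\bf S}\oplus{\bf N}$. Establishing \emph{completeness}---that every algebra lands in exactly one of the listed forms---is where most of the care is needed, although, as noted, the numerical invariants make the final non-isomorphism check routine.
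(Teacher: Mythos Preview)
The paper does not give its own proof of this statement; it is quoted verbatim from \cite{gkk} as a known classification, so there is no argument in the paper to compare against. Your Wedderburn-based outline is a correct and standard route to this result: stratifying by $k=\dim{\bf S}\in\{0,1,2,3\}$ and then, within each stratum, using the nilpotency type of ${\bf N}$ together with the Peirce action of the idempotents recovers exactly the eleven listed algebras, and the invariants you name (dimension of ${\bf A}^{2}$, rank of the induced form, Peirce eigenspace dimensions) do separate them.

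Two small remarks. First, in the $k=1$, ${\bf N}\cong\mathbb{C}[x]/(x^{3})$ case you assert that associativity forces $e_{1}$ to act as $0$ or as the identity; this is true, but it is worth recording the one-line computation, since a priori one might expect a nontrivial off-diagonal term $e_{1}e_{2}=a e_{2}+b e_{3}$. Comparing $e_{1}(e_{1}e_{2})$ with $(e_{1}e_{1})e_{2}$ gives $a^{2}=a$ and $b(2a-1)=0$, which indeed kills $b$ for either value of $a$. Second, the theorem as stated omits the abelian algebra ${\bf A}^{2}=0$; your ``trivial boundary case'' remark is the right way to handle this, but be explicit that it is excluded by convention here (it is absorbed elsewhere in the paper's later tables via the anticommutative list).
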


\begin{theorem}(see, \cite{gkk})
\label{3-dim Jordan}Let ${\bf A}$ be a complex $3$-dimensional Jordan algebra. Then ${\bf A}$ is an associative commutative algebra listed in Theorem \ref{asscom} or isomorphic to
one  of the following algebras:

\begin{longtable}{lllllllllll}
${\bf A}_{12}$ & $:$ & $e_{1}\circ e_{1}=e_{1}$ & $e_{2}\circ e_{2}=e_{2}$ & $ e_{3}\circ e_{3}=e_{1}+e_{2}$ & $e_{1}\circ e_{3}=
\frac{1}{2}e_{3}$ & $e_{2}\circ e_{3}=\frac{1}{2}e_{3}$\\

${\bf A}_{13}^{0}$ & $:$ & $e_{1}\circ e_{1}=e_{1}$ & $e_{1}\circ e_{2}=\frac{1}{2}
e_{2}$ & $e_{1}\circ e_{3}=e_{3}$\\

${\bf A}_{14}^{0,0}$ & $:$ & $e_{1}\circ e_{1}=e_{1}$ & $e_{1}\circ e_{2}=\frac{1}{2}
e_{2}$ & $e_{1}\circ e_{3}=\frac{1}{2}e_{3}$\\

${\bf A}_{15} $ & $:$ & $e_{1}\circ e_{1}=e_{1}$ & $e_{2}\circ e_{2}=e_{3}$ & $e_{1}\circ e_{2}=
\frac{1}{2}e_{2}$\\

${\bf A}_{16}$ & $:$ & $e_{1}\circ e_{1}=e_{1}$ & $e_{2}\circ e_{2}=e_{3}$ & $e_{1}\circ e_{2}=
\frac{1}{2}e_{2}$ & $e_{1}\circ e_{3}=e_{3}$\\

${\bf A}_{17}^{0}$ & $:$ & $e_{1}\circ e_{1}=e_{1}$ & $e_{2}\circ e_{2}=e_{2}$ & $e_{1}\circ e_{3}=
\frac{1}{2}e_{3}$ & $e_{2}\circ e_{3}=\frac{1}{2}e_{3}$\\

${\bf A}_{18}^{0}$ & $:$ & $e_{1}\circ e_{1}=e_{1}$ & $e_{1}\circ e_{2}=\frac{1}{2}e_{2}$\\

${\bf A}_{19}^{0}$ & $:$ & $e_{1}\circ e_{1}=e_{1}$ & 
$e_{2}\circ e_{2}=e_{2}$ & $e_{1}\circ e_{3}=\frac{1}{2}e_{3}$  \\
\end{longtable}

\end{theorem}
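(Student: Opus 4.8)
The plan is to classify $\mathbf{A}$ by structure theory rather than by a brute-force reduction of structure constants. Recall that a Jordan algebra here is a commutative algebra satisfying the Jordan identity~\eqref{Jordan}. By the Wedderburn principal theorem (Albert's splitting theorem) for finite-dimensional Jordan algebras in characteristic zero, one may write $\mathbf{A}=\mathbf{B}\oplus\mathbf{N}$, where $\mathbf{N}$ is the nilpotent radical and $\mathbf{B}$ is a semisimple subalgebra. Over $\mathbb{C}$ the semisimple part is a direct sum of simple Jordan algebras, and the only simple Jordan algebras of dimension at most $3$ are $\mathbb{C}$ (dimension $1$) and the spin factor $\mathrm{JSpin}_2\cong H_2(\mathbb{C})$ (dimension $3$); in particular there is no simple Jordan algebra of dimension $2$. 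The classification therefore splits into the four cases $\dim\mathbf{B}\in\{3,2,1,0\}$, which I would treat in turn.

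If $\dim\mathbf{B}=3$ then $\mathbf{A}$ is semisimple and equals either $\mathbb{C}^{\oplus 3}$ (three orthogonal idempotents, giving $\mathbf{A}_{04}$) or the spin factor, which in the orthogonal basis $e_1+e_2=1$, $e_1-e_2$, $e_3$ is exactly $\mathbf{A}_{12}$. For $\dim\mathbf{B}\in\{1,2\}$ the radical is nonzero, and the key device is the Peirce decomposition of $\mathbf{A}$ relative to the primitive idempotents of $\mathbf{B}$: each idempotent $e$ acts through the multiplication operator $L_e$ with eigenvalues in $\{0,\tfrac12,1\}$, splitting $\mathbf{A}=\mathbf{A}_1(e)\oplus\mathbf{A}_{1/2}(e)\oplus\mathbf{A}_0(e)$ subject to the standard Peirce multiplication rules forced by~\eqref{Jordan}. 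Distributing the generators of $\mathbf{N}$ among the Peirce spaces and imposing the remaining instances of~\eqref{Jordan} on the radical--radical products leaves only finitely many structures: for $\dim\mathbf{B}=2$ (so $\mathbf{B}=\mathbb{C}\oplus\mathbb{C}$ and $\dim\mathbf{N}=1$) one obtains $\mathbf{A}_{17}^{0}$ and $\mathbf{A}_{19}^{0}$ according to the Peirce position of the radical generator, while $\dim\mathbf{B}=1$ yields $\mathbf{A}_{13}^{0},\mathbf{A}_{14}^{0,0},\mathbf{A}_{15},\mathbf{A}_{16},\mathbf{A}_{18}^{0}$ alongside the associative commutative algebras of Theorem~\ref{asscom} that carry a nonzero idempotent.

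The case $\dim\mathbf{B}=0$ is the nilpotent one, and here I would show that a $3$-dimensional nilpotent commutative Jordan algebra $\mathbf{N}$ is necessarily associative, hence already appears among $\mathbf{A}_{01},\mathbf{A}_{02}^{0},\mathbf{A}_{03}$ and the zero algebra of Theorem~\ref{asscom}. This is a short case analysis on $\dim\mathbf{N}^2$: if $\dim\mathbf{N}^2\le 1$ then nilpotency forces $\mathbf{N}\cdot\mathbf{N}^2=0$, so the product is a symmetric bilinear form into a one-dimensional annihilator and is trivially associative; if $\dim\mathbf{N}^2=2$ then $\dim(\mathbf{N}/\mathbf{N}^2)=1$, so $\mathbf{N}$ is singly generated and equals the truncated algebra $\langle e_1,e_1^2,e_1^3\rangle$, again associative. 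In every case the concluding step is to reduce the surviving structure constants to the normal forms of the statement by the change-of-basis $\mathrm{GL}_3(\mathbb{C})$-action and selecting canonical representatives.

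The main obstacle is the isomorphism/normalization step, not the case division. It is most delicate when $\dim\mathbf{B}=1$ and the radical is two-dimensional: there the Peirce grading of $\mathbf{N}$ under the single idempotent interacts with the self-product $\mathbf{N}\cdot\mathbf{N}$, producing families of structure constants, and proving that each such structure is isomorphic to exactly one of $\mathbf{A}_{13}^{0},\dots,\mathbf{A}_{18}^{0}$ requires a careful $\mathrm{GL}_3(\mathbb{C})$-normalization --- scaling and mixing the two radical generators and absorbing the residual parameters. This bookkeeping, rather than any conceptual difficulty, is where the bulk of the work lies.
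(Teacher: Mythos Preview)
The paper does not actually prove this theorem: it is quoted verbatim from \cite{gkk} and used as input for the subsequent classification of noncommutative Jordan algebras. So there is no in-paper argument to compare against.

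Your structure-theoretic outline is correct and is in fact the classical route to such a classification. The Wedderburn--Albert splitting $\mathbf{A}=\mathbf{B}\oplus\mathbf{N}$, the list of simple complex Jordan algebras of dimension at most $3$ ($\mathbb{C}$ and the $3$-dimensional spin factor), and the Peirce decomposition relative to the idempotents of $\mathbf{B}$ together reduce the problem to a finite case analysis exactly as you describe. Your treatment of the nilpotent case is also fine once one uses that in a finite-dimensional Jordan algebra the lower powers $\mathbf{N}\supsetneq\mathbf{N}^2\supsetneq\cdots$ strictly decrease to $0$; in particular $\dim\mathbf{N}^2=1$ forces $\mathbf{N}^3=0$, which is the step you invoke. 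In the $\dim\mathbf{B}=2$ case you should also record that the Peirce positions $(0,0)$, $(1,0)$, $(0,1)$ for the radical generator yield the associative commutative algebras $\mathbf{A}_{08}$ and $\mathbf{A}_{05}$ (and that $(1,1)$, $(1,\tfrac12)$, $(\tfrac12,1)$ are empty by the eigenvalue constraint on $L_{e_1+e_2}$), so that $\mathbf{A}_{17}^{0}$ and $\mathbf{A}_{19}^{0}$ really exhaust the non-associative possibilities; this is implicit in your phrasing but worth making explicit. The genuinely tedious part, as you correctly identify, is the $\dim\mathbf{B}=1$ normalization, and there the Peirce grading on the $2$-dimensional radical cuts the work down to the six dimension patterns $(\dim\mathbf{N}_0,\dim\mathbf{N}_{1/2},\dim\mathbf{N}_1)$, each of which is a one-line computation.
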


\begin{theorem}(see, \cite{ikv20})\label{3ant}
 Let ${\bf A}$ be a complex $3$-dimensional 
anticommutative algebra. Then ${\bf A}$ is isomorphic to one of the
following algebras:

\begin{longtable}{llllll}
${\bf A}_{20}$ & $:$ & $ [ e_{2},e_{3}] =e_{1}$\\

${\bf A}_{21}$ & $:$ & $[ e_{1},e_{3}] =e_{1}$ & $[
e_{2},e_{3}] =e_{2}$\\

${\bf A}_{22}^{\alpha }$ & $:$ & $[e_{1},e_{3}]=e_{1}+e_{2}$ & $[ e_{2},e_{3}] =\alpha e_{2}$\\

${\bf A}_{23}$ & $:$ & $[ e_{1},e_{2} ]  =e_{3}$ & $[
e_{1},e_{3} ] =-e_{2}$ & $[ e_{2},e_{3} ] =e_{1}$\\

${\bf A}_{24}^{\alpha }$ & $:$ & $ [  e_{1},e_{2} ] =e_{3}$ & $ [
e_{1},e_{3} ] =e_{1}+e_{3}$ & $[ e_{2},e_{3} ] =\alpha e_{2}$\\

${\bf A}_{25}$ & $:$ & $[ e_{1},e_{2} ] =e_{1}$ & $ [
e_{2},e_{3} ] =e_{2}$\\

${\bf A}_{26}$ & $:$ & $[ e_{1},e_{2} ] =e_{3}$ & $[
e_{1},e_{3} ] =e_{1}$ & $[ e_{2},e_{3}] =e_{2}$\\
\end{longtable}

All listed algebras are non-isomorphic except:
 ${\bf A}_{22}^{\alpha }\cong {\bf A}_{22}^{\alpha^{-1} },$  ${\bf A}_{24}^{\alpha }\cong {\bf A}_{24}^{\alpha^{-1} }$.

\end{theorem}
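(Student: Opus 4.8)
The plan is to reduce the classification to a linear-algebra normal-form problem. A $3$-dimensional anticommutative product on $V=\mathbb{C}^3$ is the same datum as a linear map $\mu\colon \Lambda^2 V\to V$; fixing a volume form on $V$ (equivalently, the cross product $\times$ attached to the basis $e_1,e_2,e_3$) identifies $\Lambda^2 V\cong V$ and turns $\mu$ into a single $3\times 3$ matrix $M$, the \emph{structure matrix}, determined by $[e_2,e_3]=Me_1$, $[e_3,e_1]=Me_2$, $[e_1,e_2]=Me_3$. First I would record how $M$ transforms under a change of basis $g\in\mathrm{GL}_3(\mathbb{C})$: since $\Lambda^2 g$ acts on $\Lambda^2 V\cong V$ by the cofactor matrix $\det(g)(g^{-1})^T$, one gets $M\mapsto \det(g)\,g^{-1}M(g^{-1})^T$, i.e. (writing $h=g^{-1}$) the action $M\mapsto \det(h)^{-1}hMh^T$. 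Taking $h=\lambda I$ shows that arbitrary rescalings of $M$ are available, so two anticommutative algebras are isomorphic if and only if their structure matrices are related by $M\mapsto \mu\,hMh^{T}$ for some $h\in\mathrm{GL}_3(\mathbb{C})$ and $\mu\in\mathbb{C}^{\ast}$: congruence together with scaling.

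Next I would split $M=S+K$ into its symmetric and skew parts. Both are preserved by the action, so the congruence ranks of $S$ and of $K$ are invariants; over $\mathbb{C}$ one has $\mathrm{rank}\,S\in\{0,1,2,3\}$ and, since a skew $3\times3$ form has even rank, $\mathrm{rank}\,K\in\{0,2\}$. The proof then proceeds by a finite case analysis on the pair $(\mathrm{rank}\,S,\mathrm{rank}\,K)$ (the case $(0,0)$ being the abelian algebra). In the degenerate cases one form vanishes or has low rank, and a direct reduction to canonical form yields the isolated algebras: $K=0$ gives the symmetric cases, whose rank $1$ and rank $3$ representatives are $\mathbf{A}_{20}$ and $\mathbf{A}_{23}$ (the latter being the cross-product/$\mathfrak{sl}_2$ structure $M=I$), while $S=0$ with $K$ of rank $2$ gives $\mathbf{A}_{21}$; the remaining mixed low-rank cases, where the associated pencil degenerates, produce $\mathbf{A}_{25}$ and $\mathbf{A}_{26}$.

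The heart of the argument is the generic case where $K$ has rank $2$ and $S$ has full or nearly full rank. Here the pair $(S,K)$ is a pencil of $3\times3$ forms with one symmetric and one skew member, and I would invoke the classical canonical-form theory of such pencils (equivalently, reduce $K$ to a fixed normal form and then diagonalize $S$ under the stabilizer of $K$). The complete continuous invariant is the collection of roots of $\det(S-tK)=0$ viewed projectively: these are unchanged by congruence and by the joint scaling, and their ratio is exactly the parameter $\alpha$. Because the roots form an unordered set, relabelling them induces the residual symmetry $\alpha\leftrightarrow\alpha^{-1}$, which explains the two one-parameter families $\mathbf{A}_{22}^{\alpha}$ and $\mathbf{A}_{24}^{\alpha}$ and their stated identifications; the special values (coincident roots forcing a Jordan block, or $\alpha=-1$ forcing $K=0$) sit at the boundary of these families and recover low-rank representatives already found.

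Finally I would confirm completeness and non-redundancy: every orbit is shown to meet one of the listed normal forms, and distinctness is checked against the invariants assembled along the way (the ranks of $S$ and $K$, and the projective root set of the pencil), which separate all the isolated algebras and pin down each family up to $\alpha\leftrightarrow\alpha^{-1}$. The main obstacle is the generic pencil step: carrying out the simultaneous reduction of a symmetric and a skew form cleanly over $\mathbb{C}$, distinguishing regular from singular pencils, and tracking precisely which discrete ambiguities survive, is where the bookkeeping is delicate, whereas the low-rank cases and the final non-isomorphism checks are routine.
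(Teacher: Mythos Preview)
The paper does not prove this statement at all: Theorem~\ref{3ant} is quoted from \cite{ikv20} with no argument given, so there is no ``paper's own proof'' to compare your attempt against.

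That said, your route via the structure matrix is sound and is a genuinely different (and rather elegant) approach from the usual direct bookkeeping on structure constants. The reduction to classifying $M\in M_3(\mathbb{C})$ up to $M\mapsto c\,hMh^{T}$ is correct, and the symmetric/skew split $M=S+K$ with the rank pair $(\mathrm{rank}\,S,\mathrm{rank}\,K)$ as a first invariant organises the case analysis nicely; your identification of $\mathbf{A}_{20}$, $\mathbf{A}_{23}$, $\mathbf{A}_{21}$ with the pure rank-$1$ symmetric, rank-$3$ symmetric, and rank-$2$ skew cases is right, and $\mathbf{A}_{24}^{\alpha}$ indeed sits in the generic $(\mathrm{rank}\,S,\mathrm{rank}\,K)=(3,2)$ stratum with the $\alpha\leftrightarrow\alpha^{-1}$ symmetry coming from swapping the two roots of the pencil determinant. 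Two cautions if you actually carry this out: first, the pair $(\mathrm{rank}\,S,\mathrm{rank}\,K)$ is not fine enough by itself --- for instance $\mathbf{A}_{22}^{1}$ and $\mathbf{A}_{26}$ both have $(\mathrm{rank}\,S,\mathrm{rank}\,K)=(1,2)$ but are distinguished by $\mathrm{rank}\,M$ (equivalently, by whether $\ker S$ and $\ker K$ intersect nontrivially); second, the families $\mathbf{A}_{22}^{\alpha}$ and $\mathbf{A}_{25}$ live in the singular-pencil regime where $\det(S-tK)\equiv 0$, so your ``projective roots'' invariant is vacuous there and you will need the Kronecker-type analysis you allude to (relative position of the radicals of $S$ and $K$) rather than the regular pencil invariants. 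You flagged exactly this as the delicate step, and it is.
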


\begin{theorem}
\label{3-dim noncommutative Jordan}Let ${\bf A}$ be a 
complex $3$-dimensional
(noncommutative and  non-anticommutative) noncommutative Jordan algebra. Then $\bf{A}$
is isomorphic to one of the following algebras:
{\small
 \begin{longtable}{llllllllll}
${\bf A}_{02}^{\alpha\neq0}$ & $:$ & $e_1\cdot e_2=\left( 1+\alpha \right) e_3$ & $e_2\cdot e_1=\left(1-\alpha \right) e_3$ \\

${\bf A}_{13}^{\alpha\neq0 }$ & $:$ & $e_{1}\cdot e_{1}=e_{1}$ & $e_{1}\cdot e_{2}=(%
\frac{1}{2}+\alpha )e_{2}$ & $e_{2}\cdot e_{1}=(\frac{1}{2}-\alpha)e_{2}$ \\
&  & $e_{1}\cdot e_{3}=e_{3}$  & $e_{3}\cdot e_{1}=e_{3}$ \\

${\bf A}_{14}^{(\alpha,\beta)\neq(0,0)}$ & $:$ & $e_{1}\cdot e_{1}=e_{1}$ & $e_{1}\cdot e_{2}=(\frac{1}{2}+\alpha )e_{2}$ & $e_{2}\cdot e_{1}=(\frac{1}{2}-\alpha)e_{2}$  \\
&& $e_{1}\cdot e_{3}=(\frac{1}{2}+\beta )e_{3}$ & $e_{3}\cdot e_{1}=(\frac{1}{2}-\beta)e_{3}$   \\

${\bf A}_{17}^{\alpha\neq0 }$ & $:$ & $e_{1}\cdot e_{1}=e_{1}$ & $e_{2}\cdot e_{2}=e_{2}$ & $e_{1}\cdot e_{3}=(\frac{1}{2}+\alpha )e_{3}$  \\
&& $e_{3}\cdot e_{1}=(
\frac{1}{2}-\alpha )e_{3}$ & $e_{2}\cdot e_{3}=(\frac{1}{2}-\alpha
)e_{3}$ &  $e_{3}\cdot e_{2}=(\frac{1}{2}+\alpha )e_{3}$ \\

${\bf A}_{18}^{\alpha \neq0}$ & $:$ & $e_{1}\cdot e_{1}=e_{1}$ & $e_{1}\cdot e_{2}=(\frac{1}{2}+\alpha )e_{2}$ & $e_{2}\cdot e_{1}=(\frac{1}{2}-\alpha )e_{2}$ \\

${\bf A}_{19}^{\alpha \neq0}$ & $:$ & $e_{1}\cdot e_{1}=e_{1}$ & $e_{2}\cdot e_{2}=e_{2}$ & $e_{1}\cdot e_{3}=(\frac{1}{2}+\alpha )e_{3}$ & $e_{3}\cdot e_{1}=(\frac{1}{2}-\alpha )e_{3}$   \\

${\bf A}_{27}$ & $:$ & $e_1\cdot e_1=e_2$ & $e_1\cdot e_3=e_3$ & $e_3\cdot e_1=-e_3$ \\

${\bf A}_{28}$ & $:$ & $e_1\cdot e_1=e_2$ & $ e_1\cdot e_3=e_2$ & $e_3\cdot e_1=-e_2$ \\

${\bf A}_{29}$ & $:$ & $e_{1}\cdot e_{1}=e_{1}$ & $e_{1}\cdot e_{2}=e_{2}$ & $e_{2}\cdot e_{1}=e_{2}$ & $e_{1}\cdot e_{3}=e_{3}$ \\
&& $e_{3}\cdot e_{1}=e_{3}$ & $e_{2}\cdot e_{3}=e_{3}$ & $e_{3}\cdot e_{2}=-e_{3}$ \\

${\bf A}_{30}$ & $:$ & $e_{1}\cdot e_{1}=e_{1}$ & $e_{2}\cdot e_{3}=e_{3}$ & $e_{3}\cdot e_{2}=-e_{3}$ \\

${\bf A}_{31}^{\alpha }$ & $:$ & $e_{1}\cdot e_{1}=e_{1}$ & $e_{1}\cdot e_{2}=(\frac{1}{2}+\alpha )e_{2}$ & $e_{2}\cdot e_{1}=(\frac{1}{2}-\alpha)e_{2}$  \\
& & $e_{1}\cdot e_{3}=e_{2}+(\frac{1}{2}+\alpha )e_{3}$ & $e_{3}\cdot e_{1}=-e_{2}+(\frac{1}{2}-\alpha )e_{3}$ \\

${\bf A}_{32}$ & $:$ & $e_{1}\cdot e_{1}=e_{1}$ & $e_{1}\cdot e_{2}=\frac{1}{2}e_{2}+e_{3}$ & $e_{2}\cdot e_{1}=\frac{1}{2}e_{2}-e_{3}$ & $e_{1}\cdot e_{3}=\frac{1%
}{2}e_{3}$ \\
& & $e_{3}\cdot e_{1}=\frac{1}{2}e_{3}$ & $e_{2}\cdot e_{3}=e_{2}$ & $e_{3}\cdot e_{2}=-e_{2}$\\

${\bf A}_{33}^{\alpha }$ & $:$ & $e_{1}\cdot e_{1}=e_{1}$ & $e_{1}\cdot e_{2}=\frac{1}{2}e_{2}$ & $e_{2}\cdot e_{1}=\frac{1}{2}e_{2}$ \\
&& $e_{1}\cdot e_{3}=(\frac{1}{2}+\alpha)e_{3}$  & $e_{3}\cdot e_{1}=(\frac{1}{2}-\alpha )e_{3}$ \\&& $e_{2}\cdot e_{3}=e_{2}$ & $e_{3}\cdot e_{2}=-e_{2}$\\

\end{longtable} }

All listed algebras are non-isomorphic except:
 ${\bf A}_{02}^{\alpha }\cong {\bf A}_{02}^{-\alpha}$, ${\bf A}_{17}^{\alpha }\cong {\bf A}_{17}^{-\alpha}$ and ${\bf A}_{14}^{\alpha,\beta}\cong {\bf A}_{14}^{\beta,\alpha}$. 
\end{theorem}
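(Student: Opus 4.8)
The plan is to apply the three-step procedure established earlier in the excerpt, taking each complex $3$-dimensional Jordan algebra $(\mathbf{A},\circ)$ from Theorem~\ref{3-dim Jordan} as a starting point and classifying the generic Poisson--Jordan algebras built on top of it. By the bijective correspondence of Proposition~2 and the orbit criterion of the second Lemma, every noncommutative Jordan structure with a prescribed symmetric part $\circ$ arises from exactly one $\mathrm{Aut}(\mathbf{A})$-orbit in $\mathbf{Z}^2(\mathbf{A},\mathbf{A})$. Since we want the \emph{noncommutative and non-anticommutative} algebras, I would discard the trivial cocycle $\theta=0$ (which reproduces the Jordan algebra itself) and, when $\circ$ is the zero product, discard those cases as well since they would land among the anticommutative algebras of Theorem~\ref{3ant}.

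First I would, for each Jordan algebra $\mathbf{A}_{k}$ in the list, compute $\mathbf{Z}^2(\mathbf{A}_k,\mathbf{A}_k)$ explicitly: write a general skew-symmetric $\theta=(B_1,B_2,B_3)$ with $B_i\in\Lambda^2(\mathbf{A},\mathbb{C})$, so nine scalar unknowns, and impose the generic Poisson--Jordan compatibility $\theta(x\circ y,z)=\theta(x,z)\circ y+x\circ\theta(y,z)$ on all basis triples. This is a finite linear system whose solution space is $\mathbf{Z}^2$. Next I would determine $\mathrm{Aut}(\mathbf{A}_k)$ with respect to $\circ$ and let it act on $\mathbf{Z}^2$ via the formula $B_i'=\sum_j b_{ij}\,\phi^t B_j\,\phi$ recorded before Remark~\ref{[1 0]}. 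Then I would select orbit representatives, normalizing the free parameters using the available automorphisms --- Remark~\ref{[1 0]} is exactly the kind of small normalization lemma one invokes repeatedly to reduce a nonzero row vector of coordinates to $(1,0)$. Reading off $x\cdot_\theta y = x\circ y + [x,y]_\theta$ for each representative $\theta$ produces the multiplication tables $\mathbf{A}_{02}^{\alpha},\dots,\mathbf{A}_{33}^{\alpha}$; matching the naming convention (the superscript on $\mathbf{A}_k^{\alpha}$ recording the surviving parameter, the constraint $\alpha\neq 0$ or $(\alpha,\beta)\neq(0,0)$ encoding that we omit the cocommutative $\theta=0$ case) then gives the stated families.

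The main obstacle will be Step~2: computing the automorphism groups and carrying out the orbit analysis cleanly, since several of the Jordan algebras (for instance $\mathbf{A}_{14}^{0,0}$, $\mathbf{A}_{17}^0$, $\mathbf{A}_{18}^0$, $\mathbf{A}_{19}^0$) have sizeable automorphism groups that mix the nonzero cocycles in nontrivial ways, and the parameter reductions must be done carefully to avoid over- or under-counting orbits. I expect the $\mathbf{Z}^2$ computations themselves to be routine linear algebra, but keeping track of which normalizations are legitimate --- and proving that the listed families are genuinely distinct orbits rather than secretly identified --- is where the real bookkeeping lies.

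Finally I would establish the non-isomorphism assertions and the three stated coincidences $\mathbf{A}_{02}^{\alpha}\cong\mathbf{A}_{02}^{-\alpha}$, $\mathbf{A}_{17}^{\alpha}\cong\mathbf{A}_{17}^{-\alpha}$, and $\mathbf{A}_{14}^{\alpha,\beta}\cong\mathbf{A}_{14}^{\beta,\alpha}$. The isomorphisms come directly from residual automorphisms of the underlying Jordan algebra acting on the parameter: for $\mathbf{A}_{14}$, an automorphism swapping $e_2\leftrightarrow e_3$ interchanges $\alpha$ and $\beta$, while for $\mathbf{A}_{02}$ and $\mathbf{A}_{17}$ a sign-reversing automorphism of the relevant basis vector negates $\alpha$. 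The non-isomorphism between algebras attached to \emph{different} Jordan algebras is immediate, since an isomorphism of noncommutative Jordan algebras induces an isomorphism of the symmetric parts $\mathbf{A}^+$, which are pairwise non-isomorphic by Theorem~\ref{3-dim Jordan}; within a single Jordan algebra, distinctness of the remaining parameter values follows because no residual automorphism relates them beyond the coincidences already listed.
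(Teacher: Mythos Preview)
Your proposal is correct and follows essentially the same approach as the paper: the paper runs the three-step procedure case by case over the Jordan algebras $\mathbf{A}_{01},\dots,\mathbf{A}_{19}^0$, computing each $\mathbf{Z}^2(\mathbf{A}_k,\mathbf{A}_k)$ and $\mathrm{Aut}(\mathbf{A}_k)$ explicitly, normalizing orbit representatives (using Remark~\ref{[1 0]} in the $\mathbf{A}_{14}^{0,0}$ case exactly as you anticipate), and reading off the resulting tables. The isomorphism coincidences and non-isomorphism arguments are handled just as you describe, via residual automorphisms of the underlying Jordan algebra and the invariance of $\mathbf{A}^+$.
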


\begin{proof}
By Theorem \ref{3-dim Jordan}, we may assume ${\bf A}^+ \in \left\{ 
{\bf A}_{01},{\bf A}_{02}^{0},\ldots ,{\bf A}_{19}^0\right\} $. So, we are interested in studying   $\theta \in {\bf Z}^{2}( {\bf A}^+,{\bf A}^+),$
such that $\theta \neq 0.$
We have the following cases.

\begin{enumerate}
    \item 
 \underline{${\bf A}^+ \in \left\{ {\bf A}_{03},{\bf A}_{04}, {\bf A}_{05}, {\bf A}_{07}, {\bf A}_{08}, {\bf A}_{09}, {\bf A}_{11}, {\bf A}_{12}, {\bf A}_{15}, {\bf A}_{16}\right\} $}. Then ${\bf Z}^{2}\left( {\bf A}^+,{\bf A}%
^+\right) =\{0\}$. 
In this case, there are no new algebras for us.

\medskip 

  \item  \underline{${\bf A}^+ = {\bf A}_{01}$}. 
  Let $\theta  \in {\bf Z}^2( {\bf A}_{01},{\bf A}_{01})$. Then $\theta
=(0,\alpha \Delta_{1,3},\beta \Delta_{1,3})$ for some $\alpha,\beta \in 
\mathbb{C}$. $\mathrm{Aut}(%
{\bf A}_{01})$ consists of the invertible matrices of the following form: 
\begin{equation*}
\phi = 
\begin{pmatrix}
a_{11} & 0 & 0 \\ 
a_{21} & a_{11}^{2} & a_{23} \\ 
a_{31} & 0 & a_{33}%
\end{pmatrix}
.
\end{equation*}

Let $\phi\in \mathrm{Aut}({\bf A}_{01})$. Then $\theta * \phi =
(0,\alpha^{\prime }\Delta _{1,3},\beta^{\prime }\Delta _{1,3})$ where 
\begin{center}
$\alpha^{\prime }  = a_{11}^{-1}(\alpha a_{33} - \beta a_{23}),$ \  
$\beta^{\prime }=\beta a_{11}.$
\end{center}

Let us distinguish two cases:

\begin{itemize}
    \item  If $\beta \neq 0$, we define $\phi $ to
be the following automorphism:%
\begin{equation*}
\phi = 
\begin{pmatrix}
{\beta^{-1} } & 0 & 0 \\ 
0 & {\beta ^{-2}} & {\alpha }{\beta^{-1} } \\ 
0 & 0 & 1%
\end{pmatrix}
.
\end{equation*}%
Then $\theta \ast \phi =(0,0,\Delta _{1,3})$. So we get  the algebra ${\bf A}_{27}$.

\item If $\beta =0$, we define $\phi $ to be the following
automorphism:%
\begin{equation*}
\phi = 
\begin{pmatrix}
\alpha & 0 & 0 \\ 
0 & \alpha ^{2} & 0 \\ 
0 & 0 & 1%
\end{pmatrix}%
.
\end{equation*}%
Then $\theta \ast \phi =(0,\Delta _{1,3},0)$. So,  we get the algebra ${\bf A}_{28}$.
\end{itemize}

  \medskip     \item  \underline{${\bf A}^+ = {\bf A}_{02}^{0}$}. Let $\theta  \in {\bf Z}^2({\bf A}_{02}^{0},{\bf A}_{02}^{0})$. 
  Then $\theta
= (0,0,\alpha \Delta _{1,2})$ for some $\alpha \in \mathbb{C}$.  
$\mathrm{Aut}( {\bf A}_{02}^{0})$ 
consists of the invertible matrices of the following form:

\begin{equation*}
\phi = 
\begin{pmatrix}
a_{11} & a_{12} & 0 \\ 
a_{21} & a_{22} & 0 \\ 
a_{31} & a_{32} & a_{11}a_{22}+a_{12}a_{21}%
\end{pmatrix},  \text{ where 
}a_{12}=a_{21}=0\text{ or }a_{11}=a_{22}=0.
\end{equation*}
\noindent
Then  $\theta * \phi = (0,0,\alpha^{\prime }\Delta_{1,2})$ and $\alpha^{\prime }= \frac{\alpha
a_{11}a_{22}-\alpha a_{12}a_{21}}{a_{11}a_{22}+a_{12}a_{21}}.$
Since $a_{12}=a_{21}=0\text{ or }a_{11}=a_{22}=0$, we have $(\alpha ^{\prime
})^2=\alpha ^{2}$. So we get the representatives $\theta ^{\alpha }=\left(
0,0,\alpha \Delta _{1,2}\right) $. Moreover, we have $\theta ^{\alpha }$ and $%
\theta ^{\alpha ^{\prime }}$ in the same orbit if and only if $(\alpha
^{\prime})^2=\alpha ^{2}$. 
So, we get a family of algebras ${\bf A}_{02}^{\alpha\neq0 }$.

  \medskip     \item  \underline{${\bf A}^+ = {\bf A}_{06}$}. Let $\theta   \in {\bf Z}^{2}\left( {\bf A}_{06}, {\bf A}_{06}\right) $. Then $\theta =\left( 0,\alpha \triangle
_{2,3},\beta \triangle _{2,3}\right) $ for some $\alpha ,\beta \in \mathbb{C}
$.  $\mathrm{Aut}%
\left( {\bf A}_{06} \right) $  consists of the invertible matrices of the
following form:%
\begin{equation*}
\phi = 
\begin{pmatrix}
1 & 0 & 0 \\ 
0 & a_{22} & a_{23} \\ 
0 & a_{32} & a_{33}%
\end{pmatrix}
.
\end{equation*}%
Then $\theta \ast \phi =\left( 0, (\alpha a_{33}-\beta a_{23})\triangle _{2,3},(\beta a_{22}-\alpha a_{32})\triangle _{2,3}\right) $ and 
for suitable $a_{ij},$ we get the representative $\theta =\left( 0,0,\triangle _{2,3} \right) $
and   the algebra ${\bf A}_{29}$.


  \medskip     \item  \underline{${\bf A}^+ = {\bf A}_{10}$}. Let  $\theta  \in {\bf Z}^{2}\left( 
{\bf A}_{10},{\bf A}_{10}\right) $. Then $\theta =\left( 0,\alpha
\triangle _{2,3},\beta \triangle _{2,3}\right) $ for some $\alpha ,\beta \in 
\mathbb{C}$. $\mathrm{Aut}\left( 
{\bf A}_{10}\right) $ consists of the invertible matrices of the following
form:%
\begin{equation*}
\phi = 
\begin{pmatrix}
1 & 0 & 0 \\ 
0 & a_{22} & a_{23} \\ 
0 & a_{32} & a_{33}%
\end{pmatrix}
.
\end{equation*}%
Then  $\theta \ast \phi =\left( 0, (\alpha a_{33}-\beta a_{23})\triangle _{2,3}, (\beta a_{22}-\alpha a_{32})\triangle _{2,3}\right) $ and 
for suitable $a_{ij},$ we get the representative $\theta =\left( 0,0,\triangle _{2,3}\right) $
and  the algebra ${\bf A}_{30}$.


  \medskip     \item  \underline{${\bf A}^+ = {\bf A}_{13}^{0}$}. Let $\theta  \in {\bf Z}^{2}\left( {\bf A}_{13}^{0},{\bf A}_{13}^{0}\right) $. Then $\theta =\left( 0,\alpha \Delta
_{1,2},0\right) $ for some $\alpha \in\mathbb{C}$.  $\mathrm{Aut}\left( {\bf A}_{13}^{0}\right) $  consists of the
invertible matrices of the following form:%
\begin{equation*}
\phi = 
\begin{pmatrix}
1 & 0 & 0 \\ 
a_{21} & a_{22} & 0 \\ 
0 & 0 & a_{33}%
\end{pmatrix}%
.
\end{equation*}%
Then $\theta \ast \phi =\left( 0,\alpha ^{\prime }\Delta _{1,2},0\right) $ 
 and $\alpha ^{\prime }=\alpha $. Thus we have the the family of representatives $\theta
^{\alpha }=\left( 0,\alpha \Delta _{1,2},0\right) $ and the family of  algebras ${\bf A}_{13}^{\alpha \neq0}$.

  \medskip     \item  \underline{${\bf A}^+ = {\bf A}_{14}^{0,0}$}. Let $\theta  \in {\bf Z}^{2}\left( {\bf A}_{14}^{0,0},{\bf A}_{14}^{0,0}\right) $. Then 
\begin{equation*}
\theta =\left( 0,\alpha_{1} \Delta _{1,2}+\alpha_{2} \Delta
_{1,3}+\alpha_{3} \Delta _{2,3},\alpha_{4} \Delta _{1,2}+\alpha_{5} \Delta
_{1,3}+\alpha_{6} \Delta _{2,3}\right)
\end{equation*}
for some $\alpha_{1},\ldots,\alpha_{6} \in\mathbb{C}$.  $\mathrm{Aut}\left( {\bf A}_{14}^{0,0}\right) $
consists of the invertible matrices of the following form:%
\begin{equation*}
\phi = 
\begin{pmatrix}
1 & 0 & 0 \\ 
a_{21} & a_{22} & a_{23} \\ 
a_{31} & a_{32} & a_{33}%
\end{pmatrix}
.
\end{equation*}%
Then 
$\theta \ast \phi=\left( 0,\beta_{1} \Delta _{1,2}+\beta_{2} \Delta
_{1,3}+\beta_{3} \Delta _{2,3},\beta_{4} \Delta _{1,2}+\beta_{5} \Delta
_{1,3}+\beta_{6} \Delta _{2,3}\right)$ and
\begin{longtable}{lcl}
$\beta _{1}$ &$=$&$\frac{ (\alpha_{1}a_{22}+\alpha_{2}a_{32}+\alpha_{3}(a_{21}a_{32}
-a_{22}a_{31}))a_{33}
-(\alpha_{4}a_{22}+\alpha_{5}a_{32}+\alpha_{6}(a_{21}a_{32}-a_{22}a_{31}))a_{23}}{\det \phi },$ \\
$\beta_{2}$ &$=$&$\frac{(\alpha_{1}a_{23}+\alpha_{2}a_{33}+
\alpha_{3}(a_{21}a_{33}-a_{31}a_{23}))a_{33}
-(\alpha_{4}a_{23}+\alpha_{5}a_{33} +\alpha_{6}(a_{21}a_{33}-a_{31}a_{23}))a_{23}}{\det \phi },$\\
$\beta_{3}$ &$=$&$\alpha _{3}a_{33}-\alpha _{6}a_{23},$ \\
$\beta_{4}$ &$=$&$\frac{ -(\alpha_{1}a_{22}+\alpha_{2}a_{32}+\alpha_{3}(a_{21}a_{32}
-a_{22}a_{31}))a_{32}
+(\alpha_{4}a_{22}+\alpha_{5}a_{32}+\alpha_{6}(a_{21}a_{32}-a_{22}a_{31}))a_{22}}{\det \phi },$ \\
$\beta_{5}$ &$=$&$\frac{- (\alpha_{1}a_{23}+\alpha_{2}a_{33}+
\alpha_{3}(a_{21}a_{33}-a_{31}a_{23}))a_{32}
+(\alpha_{4}a_{23}+\alpha_{5}a_{33} +\alpha_{6}(a_{21}a_{33}-a_{31}a_{23}))a_{22}}{\det \phi },$\\
$\beta _{6}$ &$=$&$\alpha _{6}a_{22}-\alpha _{3}a_{32}.$
\end{longtable}
\noindent
By Remark \ref{[1 0]}, we may assume $\left( \alpha _{3},\alpha _{6}\right)
\in \left\{ \left( 0,0\right) ,\left( 1,0\right) \right\} $. 
Let us consider these cases separately.
\begin{itemize}
    \item Assume first
that $(\alpha _{3},\alpha _{6})=(0,0)$. Then%
\begin{equation*}
\begin{pmatrix}
\beta _{1} & \beta _{2} \\ 
\beta _{4} & \beta _{5}%
\end{pmatrix}%
=\left( 
\begin{array}{cc}
a_{22} & a_{23} \\ 
a_{32} & a_{33}%
\end{array}%
\right) ^{-1}%
\begin{pmatrix}
\alpha _{1} & \alpha _{2} \\ 
\alpha _{4} & \alpha _{5}%
\end{pmatrix}%
\left( 
\begin{array}{cc}
a_{22} & a_{23} \\ 
a_{32} & a_{33}%
\end{array}%
\right) .
\end{equation*}%
From here, we may assume $%
\begin{pmatrix}
\alpha _{1} & \alpha _{2} \\ 
\alpha _{4} & \alpha _{5}%
\end{pmatrix}%
\in \left\{ 
\begin{pmatrix}
\alpha & 0 \\ 
0 & \beta%
\end{pmatrix}%
,%
\begin{pmatrix}
\alpha & 1 \\ 
0 & \alpha%
\end{pmatrix}%
\right\} $. So we get   representatives
\begin{center}
    $\theta _{1}^{\alpha ,\beta
}=\left( 0,\alpha \Delta _{1,2},\beta \Delta _{1,3}\right) $ and $\theta
_{2}^{\alpha }=\left( 0,\alpha \Delta _{1,2}+\Delta _{1,3},\alpha \Delta
_{1,3}\right) $.
\end{center} Clearly, $\theta _{1}^{\alpha ,\beta }$ and $\theta
_{2}^{\alpha }$ are not in the same orbit. Furthermore, we have $\theta
_{1}^{\alpha ,\beta }$ and $\theta _{1}^{\alpha ^{\prime },\beta ^{\prime }}$
in the same orbit if and only if $\left\{ \alpha ,\beta \right\} =\left\{
\alpha ^{\prime },\beta ^{\prime }\right\} $. Also, $\theta _{2}^{\alpha }$
and $\theta _{2}^{\alpha ^{\prime }}$ are in the same orbit if and only if $%
\alpha =\alpha ^{\prime }$. Thus we get families of algebras ${\bf A}_{14}^{(\alpha,\beta)\neq (0,0) }$ and ${\bf A}_{31}^{\alpha }$. 

\item Assume now that $(\alpha _{3},\alpha _{6})=(1,0)$. Then we have the following cases:

\begin{itemize}
\item $\alpha _{4}\neq 0$. We define $\phi $ to be the following
automorphism:%
\begin{equation*}
\phi =%
\begin{pmatrix}
1 & 0 & 0 \\ 
-\frac{\alpha _{5}^{2}+\alpha _{2}\alpha _{4}}{\alpha _{4}} & \frac{1}{%
\alpha _{4}} & -\frac{\alpha _{5}}{\alpha _{4}} \\ 
\alpha _{1}+\alpha _{5} & 0 & 1%
\end{pmatrix}%
.
\end{equation*}%
Then $\theta \ast \phi =\left( 0,\Delta _{23},\Delta _{12}\right) $. So we
get   the algebra ${\bf A}_{32}$.

\item $\alpha _{4}=0$. We choose $\phi $ to be the following automorphism:%
\begin{equation*}
\phi =%
\begin{pmatrix}
1 & 0 & 0 \\ 
-\alpha _{2} & 1 & 0 \\ 
\alpha _{1} & 0 & 1%
\end{pmatrix}%
.
\end{equation*}%
Then $\theta \ast \phi =\left( 0,\Delta _{23},\alpha _{5}\Delta _{13}\right) 
$. Hence we get the representatives $\theta ^{\alpha }=\left( 0,\Delta
_{23},\alpha\Delta _{13}\right) $. The representatives $\theta ^{\alpha }$
and $\theta ^{\beta }$ are in the same orbit if and only if $\alpha=\beta$.
Thus, we obtain the algebras ${\bf A}_{33}^{\alpha }$.
\end{itemize}

\end{itemize}

  \medskip     \item  \underline{${\bf A}^+ = {\bf A}_{17}^{0}$}. Let $\theta  \in  {\bf Z}^{2}\left( {\bf A}_{17}^{0},{\bf A}_{17}^{0} \right) $. Then $\theta =\left( 0,0,\alpha \left( \Delta
_{1,3}-\Delta _{2,3}\right) \right) $ for some $\alpha \in \mathbb{C}$. $\mathrm{Aut}\left( {\bf A}_{17}^{0}\right) $  consists of the invertible matrices of the following form: 
\begin{equation*}
\left( 
\begin{array}{ccc}
1 & 0 & 0 \\ 
0 & 1 & 0 \\ 
a_{31} & -a_{31} & a_{33}%
\end{array}%
\right) \mbox{ and } \left( 
\begin{array}{ccc}
0 & 1 & 0 \\ 
1 & 0 & 0 \\ 
a_{31} & -a_{31} & a_{33}%
\end{array}%
\right) .
\end{equation*}%
Then $\theta \ast \phi =\left( 0,0,\alpha ^{\prime }\left( \Delta _{1,3}-\Delta
_{2,3}\right) \right) $ and     $(\alpha^\prime) ^2=\alpha^{2}$.  
So we get the family of algebras ${\bf A}_{17}^{\alpha\neq0}$.

  \medskip     \item  \underline{${\bf A}^+ = {\bf A}_{18}^{0}$}. Let   $\theta  \in  {\bf Z}^{2}\left( {\bf A}_{18}^{0},{\bf A}_{18}^{0} \right) $. Then $\theta =\left( 0,\alpha \Delta
_{1,2},0\right) $ for some $\alpha \in \mathbb{C}$.   $\mathrm{Aut}\left( {\bf A}_{18}^{0}\right) $  consists of
the invertible matrices of the following form
\begin{equation*}
\phi =\left( 
\begin{array}{ccc}
1 & 0 & 0 \\ 
a_{21} & a_{22} & 0 \\ 
0 & 0 & a_{33}%
\end{array}%
\right).
\end{equation*}%
Then $\theta \ast \phi =\left( 0,\alpha ^{\prime }\Delta _{1,2},0\right) $ and $
\alpha ^{\prime }=\alpha $.   So we get the family of algebras ${\bf A}_{18}^{\alpha\neq0 }$.

  \medskip     \item  \underline{${\bf A}^+ = {\bf A}_{19}^{0}$}. Let  $\theta \in  {\bf Z}^{2}\left( {\bf A}_{19}^{0},{\bf A}_{19}^{0} \right) $. Then $\theta =\left( 0,0,\alpha \Delta_{1,3}\right) $ for some $\alpha \in \mathbb{C}$.  $\mathrm{Aut}\left( {\bf A}_{19}^{0}\right) $  consists of
the invertible matrices of the following form:%
\begin{equation*}
\phi =\left( 
\begin{array}{ccc}
1 & 0 & 0 \\ 
0 & 1 & 0 \\ 
a_{31} & 0 & a_{33}
\end{array}%
\right) .
\end{equation*}%
Then $\theta \ast \phi =\left( 0, 0, \alpha ^{\prime }\Delta _{1,3}\right) $
and  $\alpha ^{\prime }=\alpha $.  So we get the family of algebras ${\bf A}_{19}^{\alpha \neq0}$.

\end{enumerate}
\end{proof}

\subsubsection{The algebraic classification of 
  complex   $4$-dimensional nilpotent noncommutative Jordan  algebras} 
Let us remember the classification of  $4$-dimensional nilpotent noncommutative Jordan algebras
obtained in \cite{ikv18}. This classification will be useful for results given below, in subsections \ref{Kokorisnilp} and \ref{standnilp}.

\begin{theorem}\label{nilp4nj}
Let $\mathcal J$ be a complex $4$-dimensional nilpotent noncommutative Jordan 
algebra. Then $\mathcal J$ is a $2$-step nilpotent  algebra listed in \cite{kppv} or  isomorphic to one
of the following algebras:
\end{theorem}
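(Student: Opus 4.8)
The plan is to run the cocycle machinery of Section~1 (Steps~$1$--$3$) in the nilpotent setting, exactly as in the proof of Theorem~\ref{3-dim noncommutative Jordan}, but starting from the list of $4$-dimensional nilpotent Jordan algebras in place of the $3$-dimensional Jordan algebras of Theorem~\ref{3-dim Jordan}. By the bijective correspondence between noncommutative Jordan algebras and generic Poisson--Jordan algebras, every $4$-dimensional noncommutative Jordan algebra $\mathcal J=(\mathbf A,\cdot)$ is encoded by a pair $(\mathbf A^{+},\theta)$, where $\mathbf A^{+}=(\mathbf A,\circ)$ is a Jordan algebra and $\theta=[\cdot,\cdot]\in\mathbf Z^{2}(\mathbf A^{+},\mathbf A^{+})$, with $x\cdot y=x\circ y+\theta(x,y)$; moreover two such structures are isomorphic precisely when the corresponding $\theta$'s lie in the same $\mathrm{Aut}(\mathbf A^{+})$-orbit.

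First I would prove a nilpotency-transfer lemma: if $\mathcal J$ is nilpotent, then $\mathbf A^{+}$ is a nilpotent Jordan algebra. The point is that $\mathbf A^{2}=(\mathbf A^{+})^{2}+\theta(\mathbf A,\mathbf A)$ with $\mathbf (A^{+})^{2}\subseteq \mathbf A^{2}$, and the symmetrized products propagate through the descending chain, so $(\mathbf A^{+})^{n}$ is controlled by $\mathbf A^{n}$; hence $\mathbf A^{+}$ must occur in the (finite) classification of $4$-dimensional nilpotent Jordan algebras. Conversely, nilpotency of $\mathbf A^{+}$ alone does not force $\mathcal J$ to be nilpotent (already when $\circ\equiv 0$ one recovers arbitrary anticommutative algebras), so among the cocycles $\theta$ one must impose an additional nilpotency condition selecting exactly those for which the reconstructed product $\cdot_{\theta}$ is nilpotent. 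With the base reduced to a finite list, I would then carry out Steps~$1$--$3$ for each nilpotent Jordan algebra $\mathbf A^{+}$: compute $\mathbf Z^{2}(\mathbf A^{+},\mathbf A^{+})$, intersect it with the nilpotency constraint, determine the $\mathrm{Aut}(\mathbf A^{+})$-action via the rule $B_{i}'=\sum_{j}b_{ij}\phi^{t}B_{j}\phi$ together with normal-form reductions of the type in Remark~\ref{[1 0]}, and pick one representative $\theta$ per orbit, each giving a nilpotent noncommutative Jordan algebra $(\mathbf A,\cdot_{\theta})$. The $2$-step nilpotent outcomes, which arise from the most degenerate bases, are already recorded in \cite{kppv} and are set aside, leaving only the non-$2$-step algebras to be listed.

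It remains to check that the surviving algebras are pairwise non-isomorphic and that the list is exhaustive. Exhaustiveness is automatic from the orbit analysis, and non-isomorphism follows because distinct orbit representatives give non-isomorphic structures, supplemented where needed by discriminating invariants. I expect the principal obstacle to be the sheer computational bulk rather than any conceptual difficulty: the number of $4$-dimensional nilpotent Jordan base algebras is large and for several of them $\mathbf Z^{2}$ is high-dimensional, so the orbit reduction and the bookkeeping of normal forms are where the real effort lies. This is precisely the computation performed in \cite{ikv18}, whose outcome the present theorem records.
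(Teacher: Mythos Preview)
The paper does not prove this theorem; it simply quotes the result from \cite{ikv18}. Your proposed route---via the generic Poisson--Jordan machinery of Section~1---is in principle sound, and you correctly identify the two technical ingredients: the nilpotency transfer $(\mathbf A^{+})^{k}\subseteq \mathbf A^{k}$, and the need to filter the resulting orbits by nilpotency of $\cdot_{\theta}$. However, your closing sentence is not accurate: the computation in \cite{ikv18} is \emph{not} carried out this way. That paper uses the Skjelbred--Sund central-extension method, constructing each $4$-dimensional nilpotent noncommutative Jordan algebra as a central extension of a nilpotent noncommutative Jordan algebra of dimension $\le 3$ via cohomology classes in $H^{2}(\mathbf A,\mathbb C)$ with trivial coefficients, rather than fixing $\mathbf A^{+}$ and deforming by antisymmetric cocycles in $\mathbf Z^{2}(\mathbf A^{+},\mathbf A^{+})$. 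The Poisson--Jordan cocycle method is the novelty of the present paper, not of \cite{ikv18}.

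The practical difference is this: the central-extension method never leaves the nilpotent world---every algebra it produces is automatically nilpotent---whereas your route first manufactures \emph{all} noncommutative Jordan algebras with a prescribed nilpotent plus-algebra (many of which are not nilpotent, as you yourself observe in the anticommutative case) and only afterwards discards the non-nilpotent ones. Your approach also requires the full classification of $4$-dimensional nilpotent Jordan algebras as external input, while \cite{ikv18} needs only the lower-dimensional nilpotent noncommutative Jordan classification. Both routes reach the same list, but the central-extension method is tailored to the nilpotent setting and is what \cite{ikv18} actually does.
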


\begin{longtable}{llllllll}

        ${\mathcal J}_{01}$&$:$& $e_1 e_1 = e_2$ & $e_1 e_2=e_3$  & $e_2 e_1=e_3$ \\
        ${\mathcal J}_{02}$&$:$& $e_1 e_1 = e_2$ & $e_2 e_3=e_4$  & $e_3 e_2=e_4$ & $e_3 e_1=e_4$ \\
        ${\mathcal J}_{03}$&$:$& $e_1 e_1 = e_2$ & $e_1 e_2=e_4$  & $e_2 e_1=e_4$ & $e_3 e_1=e_4$ & $e_3 e_3=e_4$\\
        ${\mathcal J}_{04}$&$:$& $e_1 e_1 = e_2$ & $e_1 e_2=e_4$  & $e_2 e_1=e_4$ & $e_3 e_1=e_4$\\
        ${\mathcal J}_{05}$&$:$& $e_1 e_1 = e_2$ & $e_2 e_3=e_4$  & $e_3 e_2=e_4$\\
        ${\mathcal J}_{06}$&$:$& $e_1 e_1 = e_2$ & $e_1 e_2=e_4$  & $e_2 e_1=e_4$ & $e_3e_3 =e_4$\\
        ${\mathcal J}_{07}$&$:$& $e_1 e_2 = e_3$ & $e_1 e_3=e_4$  & $e_2 e_1=e_3+e_4$ & $e_2 e_3=e_4$ & $e_3 e_1=e_4$ & $e_3 e_2=e_4$\\
        ${\mathcal J}_{08}$&$:$& $e_1 e_2 = e_3$ & $e_1 e_3= e_4$ & $e_2 e_1=e_3+e_4$ & $e_2 e_2=e_4$ & $e_3 e_1=e_4$\\
        ${\mathcal J}_{09}$&$:$& $e_1 e_2 = e_3$ & $e_1 e_3= e_4$ & $e_2 e_1=e_3+e_4$ & $e_3 e_1=e_4$\\
        ${\mathcal J}_{10}$&$:$& $e_1 e_2 = e_3$ & $e_1 e_3= e_4$ & $e_2 e_1=e_3$ & $e_2 e_3=e_4$ & $e_3 e_1=e_4$  & $e_3 e_2=e_4$\\
        ${\mathcal J}_{11}$&$:$& $e_1 e_2 = e_3$ & $e_1 e_3= e_4$ & $e_2 e_1=e_3$ & $e_3 e_1=e_4$\\
        ${\mathcal J}_{12}$&$:$& $e_1 e_2 = e_3$ & $e_1 e_3= e_4$ & $e_2 e_1=e_3$ & $e_2 e_2=e_4$ & $e_3 e_1=e_4$\\
        ${\mathcal J}_{13}$&$:$& $e_1 e_2 = e_3$ & $e_1 e_3=e_4$  & $e_2 e_1=-e_3$  &$e_3 e_1=-e_4$ \\
        ${\mathcal J}_{14}$&$:$& $e_1 e_1 = e_4$ & $e_1 e_2 = e_3$ & $e_1 e_3=e_4$ & $e_2 e_1=-e_3$ & $e_3 e_1=-e_4$ \\
        ${\mathcal J}_{15}$&$:$& $e_1 e_2 = e_3$ &  $e_1 e_3=e_4$ & $e_2 e_1=-e_3+e_4$ & $e_3 e_1=-e_4$ \\
        ${\mathcal J}_{16}$&$:$& $e_1 e_2 = e_3$ &  $e_1 e_3=e_4$ & $e_2 e_1=-e_3$ & $e_2 e_2=e_4$ & $e_3 e_1=-e_4$ \\
        ${\mathcal J}_{17}$&$:$& $e_1 e_1 = e_4$ & $e_1 e_2 = e_3$ & $e_1 e_3=e_4$ & $e_2 e_1=-e_3$ & $e_2 e_2=e_4$ & $e_3 e_1=-e_4$\\
        ${\mathcal J}_{18}$&$:$& $e_1 e_1 = e_2$ &$e_1 e_2 = e_3$ & $e_1 e_3=e_4$ & $e_2 e_1=e_3$ & $e_2 e_2=e_4$ & $e_3 e_1=e_4$
\end{longtable}

\subsection{The algebraic classification of 
  Kokoris  algebras}\label{Kokoris}
A flexible algebra $\bf A$ is called   {a Kokoris algebra}
if $\bf A^+$ is associative.
Kokoris  algebras were introduced in  \cite{k60} 
and they also appear in \cite{sk91,D24}.
  $({\bf A},\cdot) $ is a Kokoris algebra if
and only if $({\bf A},\circ ,[\cdot,\cdot])$ is a generic Poisson
algebra. Also, observe that Poisson algebras are related to  Lie admissible Kokoris algebras.

\subsubsection{The algebraic classification of 
  complex $3$-dimensional Kokoris  algebras}\label{Kokoris}

\begin{theorem}
\label{3-dim Kokoris algebra}Let $\bf{K}$ be a complex $3$-dimensional Kokoris
algebra. Then $\bf{K}$ is an associative commutative algebra listed in Theorem \ref{asscom}, 
an anticommutative algebra listed in Theorem \ref{3ant} or 
it is isomorphic to one
of the following algebras:

\begin{longtable}{lllllllllllll} 
${\bf A}_{02}^{\alpha\neq0}$ & $:$ & $e_1\cdot e_2=\left( 1+\alpha \right) e_3$ & $e_2\cdot e_1=\left(1-\alpha \right) e_3$ \\
 
${\bf A}_{27}$ & $:$ & $e_1\cdot e_1=e_2$ & $e_1\cdot e_3=e_3$ & $e_3\cdot e_1=-e_3$\\

${\bf A}_{28}$ & $:$ & $e_1\cdot e_1=e_2$ & $e_1\cdot e_3=e_2$ & $e_3\cdot e_1=-e_2$\\

${\bf A}_{29}$ & $:$ & $e_{1}\cdot e_{1}=e_{1}$ & $e_{1}\cdot e_{2}=e_{2}$ & $e_{2}\cdot e_{1}=e_{2}$ & $e_{1}\cdot e_{3}=e_{3}$\\ 
& & $e_{3}\cdot e_{1}=e_{3}$ & $e_{2}\cdot e_{3}=e_{3}$ & $e_{3}\cdot e_{2}=-e_{3}$\\

${\bf A}_{30}$ & $:$ & $e_{1}\cdot e_{1}=e_{1}$ & $e_{2}\cdot e_{3}=e_{3}$ & $e_{3}\cdot e_{2}=-e_{3}$\\

\end{longtable}

All listed algebras are non-isomorphic except:
${\bf A}_{02}^{\alpha }\cong {\bf A}_{02}^{-\alpha}.$ 
\end{theorem}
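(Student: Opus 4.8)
The plan is to mirror the proof of Theorem~\ref{3-dim noncommutative Jordan}, but now restricting attention to those Jordan algebras $({\bf A},\circ)$ that are in fact \emph{associative commutative}, since by Proposition (the bijective correspondence) a flexible algebra is Kokoris precisely when ${\bf A}^+$ is associative. Thus by Theorem~\ref{asscom} we may assume ${\bf A}^+ \in \{{\bf A}_{01},{\bf A}_{02}^0,\ldots,{\bf A}_{11}\}$, the list of $3$-dimensional associative commutative algebras. For each such ${\bf A}^+$ we must compute ${\bf Z}^2({\bf A}^+,{\bf A}^+)$, let $\mathrm{Aut}({\bf A}^+)$ act, and pick orbit representatives $\theta$; the resulting noncommutative Jordan algebras ${\bf A}_\theta$ are exactly the Kokoris algebras. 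Because this associative-commutative list is a subset of the Jordan list used in the previous theorem, nearly all of the required cocycle computations and automorphism actions have \emph{already been carried out} in the proof of Theorem~\ref{3-dim noncommutative Jordan}.

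Concretely, the first step is to observe that for ${\bf A}^+ \in \{{\bf A}_{03},{\bf A}_{04},{\bf A}_{05},{\bf A}_{07},{\bf A}_{08},{\bf A}_{09},{\bf A}_{11}\}$ we have ${\bf Z}^2({\bf A}^+,{\bf A}^+)=\{0\}$ (this is precisely case~(1) of the previous proof, intersected with the associative-commutative list), so these contribute no new algebras beyond the commutative and anticommutative cases already tabulated. The second step is to quote, verbatim from the previous proof, the orbit analyses for the remaining associative-commutative bases: ${\bf A}_{01}$ yields ${\bf A}_{27}$ and ${\bf A}_{28}$; ${\bf A}_{02}^0$ yields the one-parameter family ${\bf A}_{02}^{\alpha\neq0}$ with the identification $\theta^\alpha\sim\theta^{\alpha'}\iff(\alpha')^2=\alpha^2$, hence ${\bf A}_{02}^{\alpha}\cong{\bf A}_{02}^{-\alpha}$; ${\bf A}_{06}$ yields ${\bf A}_{29}$; and ${\bf A}_{10}$ yields ${\bf A}_{30}$. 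In each of these cases the cocycle space, the automorphism group, and the transformation rule for $\theta\ast\phi$ are identical to those displayed in Theorem~\ref{3-dim noncommutative Jordan}, so the representatives and their non-isomorphism are inherited directly. The anticommutative algebras of Theorem~\ref{3ant} enter because any anticommutative algebra is trivially flexible and has the zero (hence associative) commutative part, so every $3$-dimensional anticommutative algebra is Kokoris.

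The one conceptual point that must be checked, rather than merely quoted, is the \emph{completeness} of the list: I must verify that no Jordan algebra appearing in Theorem~\ref{3-dim Jordan} but \emph{not} in Theorem~\ref{asscom} --- namely ${\bf A}_{12},\ldots,{\bf A}_{19}^0$ --- contributes a Kokoris algebra. This is automatic, since a Kokoris algebra requires ${\bf A}^+$ associative, and those eight algebras are Jordan but non-associative; they therefore cannot arise as the symmetrized part of any Kokoris algebra and are correctly excluded. I expect the main (though still routine) obstacle to be bookkeeping: ensuring that each representative $\theta$ produced above genuinely lands among the algebras ${\bf A}_{02}^{\alpha\neq0},{\bf A}_{27},{\bf A}_{28},{\bf A}_{29},{\bf A}_{30}$ when the product $x\cdot_\theta y = x\circ y + [x,y]_\theta$ is written out on the basis, and that the only coincidence among all representatives --- across the associative-commutative, anticommutative, and new families --- is the stated ${\bf A}_{02}^{\alpha}\cong{\bf A}_{02}^{-\alpha}$. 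Since every orbit computation is a special case of one already performed, the non-isomorphism claims follow from the corresponding claims in Theorems~\ref{3ant} and~\ref{3-dim noncommutative Jordan}, and the proof reduces to assembling these pieces.
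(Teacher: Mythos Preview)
Your proposal is correct and matches the paper's own treatment: the paper gives no separate proof of this theorem, relying implicitly on the observation that Kokoris algebras are precisely the noncommutative Jordan algebras with associative ${\bf A}^+$, so the classification is obtained by restricting the case analysis of Theorem~\ref{3-dim noncommutative Jordan} to ${\bf A}^+\in\{{\bf A}_{01},\ldots,{\bf A}_{11}\}$. Your write-up makes this restriction explicit and correctly identifies which cases survive.
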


 \subsubsection{The algebraic classification of 
  complex $3$-dimensional   flexible  power-associative  algebras}\label{Kokoris}

It is a well known fact that if an algebra $\textbf{A}$ is power-associative then $%
\textbf{A}^{+}$ is power-associative. For flexible algebras the converse is also
true \cite{Kleinfeld}. In \cite{Rodrigues}, it is proved that any
commutative power-associative algebra with dimension at most $3$ is a Jordan
algebra. Since a flexible algebra $\textbf{A}$ is a noncommutative Jordan algebra if
and only if the corresponding algebra $\textbf{A}^{+}$ is a Jordan algebra, all
flexible power-associative algebras of dimension at most $3$ are
noncommutative Jordan algebras.

\medskip

\begin{corollary}
Let ${\bf A}$ be a complex $3$-dimensional    flexible  power-associative algebra, 
then ${\bf A}$ is isomorphic to one algebra listed in Theorems \ref{asscom}, \ref{3-dim Jordan}, \ref{3ant} and \ref{3-dim noncommutative Jordan}.
\end{corollary}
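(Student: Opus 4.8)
The plan is to assemble the corollary directly from the structural facts recalled in the paragraph immediately preceding it, since the genuine classification work has already been done in Theorems~\ref{asscom},~\ref{3-dim Jordan},~\ref{3ant} and~\ref{3-dim noncommutative Jordan}. First I would take an arbitrary complex $3$-dimensional flexible power-associative algebra $\mathbf{A}$ and pass to its symmetrization $\mathbf{A}^{+}$. By the first quoted fact, power-associativity of $\mathbf{A}$ forces power-associativity of $\mathbf{A}^{+}$; and $\mathbf{A}^{+}$ is commutative by construction and shares the underlying $3$-dimensional vector space with $\mathbf{A}$.

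The next step is to invoke the result of Rodrigues \cite{Rodrigues}: every commutative power-associative algebra of dimension at most $3$ is a Jordan algebra. Applying this to $\mathbf{A}^{+}$ yields that $\mathbf{A}^{+}$ is a Jordan algebra. I would then use the flexibility of $\mathbf{A}$ together with the criterion recalled after \eqref{Jordan}, namely that a flexible algebra is a noncommutative Jordan algebra precisely when its plus-algebra is Jordan. Since $\mathbf{A}$ is flexible and $\mathbf{A}^{+}$ has just been shown to be Jordan, this gives that $\mathbf{A}$ is itself a noncommutative Jordan algebra.

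To finish, I would observe that the four cited theorems jointly furnish a complete list of the complex $3$-dimensional noncommutative Jordan algebras: Theorem~\ref{asscom} covers the associative commutative case, Theorem~\ref{3-dim Jordan} the Jordan case, Theorem~\ref{3ant} the anticommutative case, and Theorem~\ref{3-dim noncommutative Jordan} the remaining noncommutative, non-anticommutative case. Hence $\mathbf{A}$ is isomorphic to one of the algebras appearing in these theorems, which is exactly the assertion.

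The step-by-step chain is entirely routine once the background results are granted; the only point requiring a moment's care is the verification that the four classification theorems are exhaustive for noncommutative Jordan algebras in dimension three, so that no flexible power-associative algebra can escape the list. This is the natural place to double-check that the trichotomy (associative commutative / Jordan / anticommutative / genuinely noncommutative) leaves no gap, but it is a bookkeeping issue rather than a mathematical obstacle, since these theorems were established precisely to give the full algebraic classification.
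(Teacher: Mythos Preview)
Your proposal is correct and follows exactly the same route as the paper: pass to $\mathbf{A}^{+}$, use that it is commutative power-associative of dimension $\leq 3$ and hence Jordan by \cite{Rodrigues}, then invoke flexibility to conclude that $\mathbf{A}$ is noncommutative Jordan and therefore appears in the four classification theorems. There is nothing to add.
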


\subsubsection{The algebraic classification of 
  complex  $4$-dimensional nilpotent   Kokoris  algebras}\label{Kokorisnilp}

\begin{theorem}\label{koknilp}
Let $\mathcal J$ be a complex $4$-dimensional nilpotent Kokoris   
algebra. Then $\mathcal J$ is a $2$-step nilpotent  algebra listed in  \cite{kppv}  or  isomorphic to one
of the following algebras:
\end{theorem}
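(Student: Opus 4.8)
The plan is to mirror exactly the methodology already deployed for the noncommutative Jordan case in Theorem~\ref{nilp4nj}, exploiting Proposition~stated above that Kokoris algebras are in bijective correspondence with generic Poisson algebras $({\bf A},\circ,[\cdot,\cdot])$ whose commutative part $({\bf A},\circ)$ is \emph{associative commutative} (rather than merely Jordan). Concretely, a $4$-dimensional nilpotent Kokoris algebra $\mathcal J$ is determined by a pair consisting of a $4$-dimensional nilpotent associative commutative algebra $({\bf A},\circ)$ together with a cocycle $\theta\in{\bf Z}^2({\bf A},{\bf A})$, and two such pairs give isomorphic Kokoris algebras precisely when the cocycles lie in the same $\mathrm{Aut}({\bf A})$-orbit (the same orbit-reduction Lemmas proved above apply verbatim, since they never used anything beyond the generic-Poisson axioms). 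So the first step is to list the nilpotent associative commutative algebras in dimension $4$, discarding the $2$-step nilpotent ones (which are collected separately in \cite{kppv}) and keeping those with nontrivial associated cocycle space.

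Next, for each such base algebra $({\bf A},\circ)$ I would execute the three-step procedure: (Step~1) compute ${\bf Z}^2({\bf A},{\bf A})$, i.e. solve the linear system coming from the generic Poisson identity $\theta(x\circ y,z)=\theta(x,z)\circ y+x\circ\theta(y,z)$ together with skew-symmetry; (Step~2) determine the $\mathrm{Aut}({\bf A})$-orbits on the resulting cocycle space, using Remark~\ref{[1 0]} and the matrix action $B_i'=\sum_j b_{ij}\phi^t B_j\phi$ to normalize representatives; (Step~3) read off the Kokoris product $x\cdot_\theta y=x\circ y+\theta(x,y)$ for each orbit representative. The bookkeeping is the same as in the proof of Theorem~\ref{3-dim noncommutative Jordan}, just one dimension higher and with the stronger associativity constraint on $\circ$, which typically shrinks both ${\bf Z}^2$ and the automorphism group relative to the Jordan case.

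The main obstacle I anticipate is the orbit computation in Step~2 for the base algebras with large cocycle spaces and large automorphism groups: in dimension $4$ the cocycle $\theta=(B_1,\dots,B_4)$ can carry many free parameters, and the simultaneous conjugation action can be genuinely intricate, requiring careful case splits (analogous to the $(\alpha_3,\alpha_6)\in\{(0,0),(1,0)\}$ branching already seen above) to produce a finite list of canonical forms and to verify that distinct representatives really lie in distinct orbits. A secondary care point is ensuring completeness and non-redundancy: I would cross-check that every nilpotent associative commutative base has been treated, that the $2$-step nilpotent outputs have been correctly excised and attributed to \cite{kppv}, and that the final isomorphism statements (including any parametric identifications such as $\alpha\cong-\alpha$) hold by exhibiting explicit automorphisms. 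Since the paper presents the theorem's conclusion only as a table of explicit structure constants, the proof reduces to carrying out these orbit computations base-by-base and recording the representatives, which I would organize as an enumerated list of cases exactly paralleling the earlier proof.
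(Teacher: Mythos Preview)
Your approach would work in principle, but it takes a far more laborious route than the paper. The paper does not redo the classification from scratch via the generic Poisson correspondence; instead, it exploits the inclusion $\text{Kokoris}\subset\text{noncommutative Jordan}$. Since the full list of complex $4$-dimensional nilpotent noncommutative Jordan algebras is already available (Theorem~\ref{nilp4nj}, quoted from \cite{ikv18}), and since a noncommutative Jordan algebra $\mathcal J$ is Kokoris precisely when $\mathcal J^{+}$ is associative, the proof reduces to running through the eighteen algebras $\mathcal J_{01},\ldots,\mathcal J_{18}$ and checking associativity of the symmetrized product---a short linear verification for each. The algebras $\mathcal J_{02},\mathcal J_{05},\mathcal J_{07},\mathcal J_{08},\mathcal J_{09},\mathcal J_{10},\mathcal J_{11},\mathcal J_{12}$ fail this test and are discarded; the remaining ten survive, and the $2$-step nilpotent case is inherited directly from \cite{kppv}. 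This is exactly parallel to how the paper handles the standard-algebra subclassifications (``by some direct verification'').

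Your cocycle-by-cocycle programme is self-contained, which is a virtue, but note a subtlety you did not address: starting from a nilpotent associative commutative $({\bf A},\circ)$ and an arbitrary $\theta\in{\bf Z}^{2}({\bf A},{\bf A})$, the resulting Kokoris algebra $({\bf A},\cdot_{\theta})$ need not be nilpotent (already an anticommutative algebra with trivial $\circ$ can be non-nilpotent), so on top of the orbit computation you would still need to filter for nilpotency of $\cdot_{\theta}$. The paper's approach sidesteps both this and the heavy orbit analysis, trading them for a handful of associator checks on an existing list.
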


\begin{longtable}{llllllll}

        ${\mathcal J}_{01}$&$:$& $e_1 e_1 = e_2$ & $e_1 e_2=e_3$  & $e_2 e_1=e_3$ \\

        ${\mathcal J}_{03}$&$:$& $e_1 e_1 = e_2$ & $e_1 e_2=e_4$  & $e_2 e_1=e_4$ & $e_3 e_1=e_4$ & $e_3 e_3=e_4$\\
        ${\mathcal J}_{04}$&$:$& $e_1 e_1 = e_2$ & $e_1 e_2=e_4$  & $e_2 e_1=e_4$ & $e_3 e_1=e_4$\\

        ${\mathcal J}_{06}$&$:$& $e_1 e_1 = e_2$ & $e_1 e_2=e_4$  & $e_2 e_1=e_4$ & $e_3e_3 =e_4$\\

        ${\mathcal J}_{13}$&$:$& $e_1 e_2 = e_3$ & $e_1 e_3=e_4$  & $e_2 e_1=-e_3$  &$e_3 e_1=-e_4$ \\
        ${\mathcal J}_{14}$&$:$& $e_1 e_1 = e_4$ & $e_1 e_2 = e_3$ & $e_1 e_3=e_4$ & $e_2 e_1=-e_3$ & $e_3 e_1=-e_4$ \\
        ${\mathcal J}_{15}$&$:$& $e_1 e_2 = e_3$ &  $e_1 e_3=e_4$ & $e_2 e_1=-e_3+e_4$ & $e_3 e_1=-e_4$ \\
        ${\mathcal J}_{16}$&$:$& $e_1 e_2 = e_3$ &  $e_1 e_3=e_4$ & $e_2 e_1=-e_3$ & $e_2 e_2=e_4$ & $e_3 e_1=-e_4$ \\
        ${\mathcal J}_{17}$&$:$& $e_1 e_1 = e_4$ & $e_1 e_2 = e_3$ & $e_1 e_3=e_4$ & $e_2 e_1=-e_3$ & $e_2 e_2=e_4$ & $e_3 e_1=-e_4$\\
        ${\mathcal J}_{18}$&$:$& $e_1 e_1 = e_2$ &$e_1 e_2 = e_3$ & $e_1 e_3=e_4$ & $e_2 e_1=e_3$ & $e_2 e_2=e_4$ & $e_3 e_1=e_4$
\end{longtable}

\subsection{The algebraic classification of 
  standard    algebras}
The notions of standard   algebras were introduced in \cite{Alb}. 
An algebra is defined to be  {standard} in case the following two
identities hold: 
\begin{eqnarray*}
(x,y,z)+(z,x,y)-(x,z,y) &=&0, \\
(x,y,wz)+(w,y,xz)+(z,y,wx) &=&0.
\end{eqnarray*}%
Standard algebras include all associative algebras and   Jordan
algebras.
It is proved that the variety of standard algebras is just the minimal variety containing the variety of associative algebras and the variety of Jordan algebras   \cite{hac18}.
Moreover, every standard algebra is a noncommutative Jordan
algebra and is therefore power-associative.
 By some  direct verification of standard    identities in noncommutative Jordan algebras, 
we have the following statements.

\subsubsection{The algebraic classification of 
complex $3$-dimensional standard      algebras}

\begin{theorem}\label{teostan}
Let $\bf{S}$ be a complex $3$-dimensional standard
algebra. Then $\bf{S}$ is a Jordan algebra listed in Theorems \ref{asscom} and \ref{3-dim Jordan} or  isomorphic to one
of the following algebras:
 
\begin{longtable}{lllllll}

${\bf A}_{02}^{\alpha\neq0 }$ & $:$ & $e_1\cdot e_2=\left(1+\alpha \right)e_3$ & $e_2\cdot e_1=\left(1-\alpha \right) e_3$\\

${\bf A}_{13}^{\frac{1}{2}}$ & $:$ & $e_{1}\cdot e_{1}=e_{1}$ & $e_{1}\cdot e_{2}=e_{2}$  
  & $e_{1}\cdot e_{3}=e_{3}$  & $e_{3}\cdot e_{1}=e_{3}$ \\

${\bf A}_{13}^{-\frac{1}{2} }$ & $:$ & $e_{1}\cdot e_{1}=e_{1}$ &  $e_{2}\cdot e_{1}= e_{2}$  
  & $e_{1}\cdot e_{3}=e_{3}$  & $e_{3}\cdot e_{1}=e_{3}$ \\

${\bf A}_{14}^{0,\frac{1}{2}}$ & $:$ & $e_{1}\cdot e_{1}=e_{1}$ & $e_{1}\cdot e_{2}=\frac{1}{2}e_{2}$ & $e_{2}\cdot e_{1}=
\frac{1}{2}e_{2}$ & $e_{1}\cdot e_{3}=e_{3}$\\

${\bf A}_{14}^{0,-\frac{1}{2}}$ & $:$ & $e_{1}\cdot e_{1}=e_{1}$ & $e_{1}\cdot e_{2}=\frac{1}{2}e_{2}$ & $e_{2}\cdot e_{1}=
\frac{1}{2}e_{2}$ & $e_{3}\cdot e_{1}=e_{3}$\\

${\bf A}_{14}^{\frac{1}{2},\frac{1}{2}}$ & $:$ & $e_{1}\cdot e_{1}=e_{1}$ & $e_{1}\cdot e_{2}=e_{2}$ & $e_{1}\cdot e_{3}=e_{3}$\\

${\bf A}_{14}^{-\frac{1}{2},-\frac{1}{2}}$ & $:$ & $e_{1}\cdot e_{1}=e_{1}$ & $e_{2}\cdot e_{1}=e_{2}$ & $e_{3}\cdot e_{1}=e_{3}$\\

${\bf A}_{14}^{\frac{1}{2},-\frac{1}{2}}$ & $:$ & $e_{1}\cdot e_{1}=e_{1}$ & $e_{1}\cdot e_{2}=e_{2}$ & $e_{3}\cdot e_{1}=e_{3}$\\

${\bf A}_{17}^{\frac{1}{2}}$ & $:$ & $e_{1}\cdot e_{1}=e_{1}$ & $e_{2}\cdot e_{2}=e_{2}$ & $e_{1}\cdot e_{3}=e_{3}$ & $e_{3}\cdot e_{2}=e_{3}$\\

${\bf A}_{18}^{\frac{1}{2}}$ & $:$ & $e_{1}\cdot e_{1}=e_{1}$ & $e_{1}\cdot e_{2}=e_{2}$\\

${\bf A}_{18}^{-\frac{1}{2}}$ & $:$ & $e_{1}\cdot e_{1}=e_{1}$ & $e_{2}\cdot e_{1}=e_{2}$\\

${\bf A}_{19}^{\frac{1}{2}}$ & $:$ & $e_{1}\cdot e_{1}=e_{1}$ & $e_{1}\cdot e_{2}=e_{2}$ & $e_{3}\cdot e_{3}=e_{3}$\\

${\bf A}_{19}^{-\frac{1}{2}}$ & $:$ & $e_{1}\cdot e_{1}=e_{1}$ & $e_{2}\cdot e_{1}=e_{2}$ & $e_{3}\cdot e_{3}=e_{3}$\\

${\bf A}_{20}$ & $:$ & $e_{2}\cdot e_{3}=e_{1}$ & $e_{3}\cdot e_{2}=-e_{1}$\\

${\bf A}_{28}$ & $:$ & $e_{1}\cdot
e_{1}=e_{2}$ & $e_{1}\cdot e_{3}=e_{2}$ & $e_{3}\cdot e_{1}=-e_{2}$\\
\end{longtable}

All listed algebras are non-isomorphic except:
${\bf A}_{02}^{\alpha }\cong {\bf A}_{02}^{-\alpha }. $
\end{theorem}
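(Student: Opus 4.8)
The plan is to reduce the classification entirely to the already-established classification of $3$-dimensional noncommutative Jordan algebras. Since every standard algebra is a noncommutative Jordan algebra, any complex $3$-dimensional standard algebra $\mathbf{S}$ must be isomorphic to one of the algebras catalogued in Theorems \ref{asscom}, \ref{3-dim Jordan}, \ref{3ant} and \ref{3-dim noncommutative Jordan}. It therefore suffices to decide, for each algebra or parametric family in this finite catalogue, whether the two defining identities of a standard algebra hold. As both identities are multilinear, each reduces to a finite check on tuples of the basis vectors $e_1,e_2,e_3$, turning the whole problem into a finite (if lengthy) computation.

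First I would dispose of the commutative case: every Jordan algebra is standard, so all algebras of Theorems \ref{asscom} and \ref{3-dim Jordan} are automatically included, with no computation needed. This accounts for the first clause of the statement, and leaves only the genuinely noncommutative algebras of Theorem \ref{3-dim noncommutative Jordan} and the anticommutative algebras of Theorem \ref{3ant} to be tested.

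The substance of the argument is then to substitute the structure constants of each remaining algebra into the trilinear identity $(x,y,z)+(z,x,y)-(x,z,y)=0$ and the tetralinear identity $(x,y,wz)+(w,y,xz)+(z,y,wx)=0$, and to read off the resulting constraints on the parameters. For the families built over a Jordan algebra with an idempotent $e_1$ --- namely $\mathbf{A}_{13}^{\alpha}$, $\mathbf{A}_{14}^{\alpha,\beta}$, $\mathbf{A}_{17}^{\alpha}$, $\mathbf{A}_{18}^{\alpha}$ and $\mathbf{A}_{19}^{\alpha}$ --- the trilinear identity is satisfied automatically, while the tetralinear identity forces the one-sided multiplication eigenvalues $\tfrac12\pm\alpha$ (and $\tfrac12\pm\beta$) of $e_1$ into the set $\{0,\tfrac12,1\}$, that is $\alpha,\beta\in\{0,\pm\tfrac12\}$; discarding the purely Jordan value $0$ already counted above yields precisely the discrete representatives listed. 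For $\mathbf{A}_{02}^{\alpha}$ the identities impose no restriction and the whole family survives, whereas the remaining individual algebras $\mathbf{A}_{27}$--$\mathbf{A}_{33}$ are tested one at a time, only $\mathbf{A}_{28}$ passing. For the anticommutative algebras the first identity collapses to a Jacobi-type expression and the tetralinear identity forces every iterated product to be central; among $\mathbf{A}_{20}$--$\mathbf{A}_{26}$ this holds only for the Heisenberg algebra $\mathbf{A}_{20}$.

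Finally, the non-isomorphism among the surviving algebras is inherited directly from Theorem \ref{3-dim noncommutative Jordan}: the only coincidence among the retained representatives is $\mathbf{A}_{02}^{\alpha}\cong\mathbf{A}_{02}^{-\alpha}$, while the identifications $\mathbf{A}_{17}^{\alpha}\cong\mathbf{A}_{17}^{-\alpha}$ and $\mathbf{A}_{14}^{\alpha,\beta}\cong\mathbf{A}_{14}^{\beta,\alpha}$ are already absorbed by listing a single representative of each such pair. I expect the main obstacle to be organisational rather than conceptual: the tetralinear identity must be evaluated on up to $3^4$ basis quadruples for every family, so the real difficulty is carrying out this bookkeeping carefully enough to extract the exact parameter constraints --- in particular the cutoff $\alpha,\beta\in\{0,\pm\tfrac12\}$ --- without error.
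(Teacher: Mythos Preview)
Your proposal is correct and matches the paper's approach exactly: the paper simply states ``By some direct verification of standard identities in noncommutative Jordan algebras, we have the following statements'' and gives no further details, so your plan of testing the two standard identities against each algebra from Theorems \ref{asscom}, \ref{3-dim Jordan}, \ref{3ant} and \ref{3-dim noncommutative Jordan} is precisely what is intended. Your outline in fact supplies more structure than the paper does, though you should be cautious about the blanket claim that the trilinear identity holds automatically for the idempotent families --- it may well, but that too is part of the verification rather than a prior fact.
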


\subsubsection{The algebraic classification of 
  complex  $4$-dimensional nilpotent   standard  algebras}\label{standnilp}

\begin{theorem}
Any complex $4$-dimensional nilpotent noncommutative Jordan algebra, listed in Theorem \ref{nilp4nj},  is a standard algebra. 
\end{theorem}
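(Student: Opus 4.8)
The plan is to verify, for each of the $18$ non-$2$-step-nilpotent algebras ${\mathcal J}_{01},\dots,{\mathcal J}_{18}$ listed in Theorem~\ref{nilp4nj}, that the two defining standard identities
\begin{eqnarray*}
(x,y,z)+(z,x,y)-(x,z,y) &=&0, \\
(x,y,wz)+(w,y,xz)+(z,y,wx) &=&0
\end{eqnarray*}
hold identically. Since every standard algebra is already a noncommutative Jordan algebra, the content of the statement is precisely the reverse containment \emph{for this particular list}: each of these $18$ nilpotent noncommutative Jordan algebras happens to be standard. The $2$-step nilpotent algebras are handled separately and trivially, since in a $2$-step nilpotent algebra every associator $(x,y,z)$ vanishes (products of three elements are zero), so both identities reduce to $0=0$; thus it suffices to treat the $18$ explicitly listed algebras.

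The first step is to reduce the identities to a finite check on basis elements. Both identities are multilinear (the first trilinear, the second in four arguments $x,y,z,w$), so by linearity it is enough to verify them when each argument ranges over the fixed basis $\{e_1,e_2,e_3,e_4\}$. For each algebra this is a finite, purely mechanical computation: one reads off the structure constants from the table, computes the relevant products $e_ie_j$, then the associators $(e_i,e_j,e_k)=(e_ie_j)e_k-e_i(e_je_k)$, and substitutes into the two identities. Because all these algebras are nilpotent of nilindex at most $4$ with the product landing in the span of $e_2,e_3,e_4$ (and frequently in $e_4$ alone), most associators and most quadruple products vanish automatically, so the number of nonzero cases to examine is small.

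The second step is to organize the computation so it is genuinely short. A useful observation is that each associator $(e_i,e_j,e_k)$ that could be nonzero already lands in the one-dimensional socle (typically $\langle e_4\rangle$), and that in the first identity the three terms share the same output slot; one checks that the signed sum of the corresponding structure-constant contributions cancels. For the second, quartic identity, note that $wz$, $xz$, $wx$ must themselves be nonzero for a term to survive, which forces the inner arguments into $\{e_1,e_2,e_3\}$ with products in $\langle e_2,e_3,e_4\rangle$, and then the outer multiplication by $y$ must again be nonzero; this severely limits the admissible index patterns, leaving only a handful of quadruples $(x,y,z,w)$ to test per algebra. In practice I would exploit the flexibility identity \eqref{flexible}, which already holds, to rewrite $(x,z,y)$-type terms and thereby collapse the first standard identity.

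The main obstacle I anticipate is not any single hard idea but the sheer bookkeeping: $18$ algebras, each requiring a check of two multilinear identities over a $4$-element basis, is error-prone if done by hand. The cleanest route is to phrase the verification so that the systematic vanishing of higher products is used up front to discard the overwhelming majority of index combinations, reducing each algebra to at most a few nontrivial substitutions. Since the authors remark that the result follows ``by some direct verification of standard identities in noncommutative Jordan algebras,'' I expect the intended proof to be exactly this finite case-by-case substitution, possibly assisted by the software acknowledged in the paper, with the nontrivial cases (those algebras such as ${\mathcal J}_{07},{\mathcal J}_{08},{\mathcal J}_{10}$ whose tables have the richest multiplication) being the ones that actually require the cancellation to be exhibited.
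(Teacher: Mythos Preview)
Your proposal is correct and matches the paper's approach: the paper offers no argument beyond the remark that the result follows ``by some direct verification of standard identities in noncommutative Jordan algebras,'' i.e., exactly the finite multilinear check on basis elements that you describe (with the $2$-step nilpotent case trivial since all associators vanish). Your organizational remarks about exploiting nilpotency to discard most index combinations are sound and make the hand computation feasible; the paper presumably relied on the acknowledged software for the same purpose.
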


\section{The geometric classification of 
non-associative   algebras}

\subsection{Definitions and notation}
Given an $n$-dimensional vector space $\mathbb V$, the set ${\rm Hom}(\mathbb V \otimes \mathbb V,\mathbb V) \cong \mathbb V^* \otimes \mathbb V^* \otimes \mathbb V$ is a vector space of dimension $n^3$. This space has the structure of the affine variety $\mathbb{C}^{n^3}.$ Indeed, let us fix a basis $e_1,\dots,e_n$ of $\mathbb V$. Then any $\mu\in {\rm Hom}(\mathbb V \otimes \mathbb V,\mathbb V)$ is determined by $n^3$ structure constants $c_{ij}^k\in\mathbb{C}$ such that
$\mu(e_i\otimes e_j)=\sum\limits_{k=1}^nc_{ij}^ke_k$. A subset of ${\rm Hom}(\mathbb V \otimes \mathbb V,\mathbb V)$ is {\it Zariski-closed} if it can be defined by a set of polynomial equations in the variables $c_{ij}^k$ ($1\le i,j,k\le n$).

Let $T$ be a set of polynomial identities.
The set of algebra structures on $\mathbb V$ satisfying polynomial identities from $T$ forms a Zariski-closed subset of the variety ${\rm Hom}(\mathbb V \otimes \mathbb V,\mathbb V)$. We denote this subset by $\mathbb{L}(T)$.
The general linear group ${\rm GL}(\mathbb V)$ acts on $\mathbb{L}(T)$ by conjugations:
$$ (g * \mu )(x\otimes y) = g\mu(g^{-1}x\otimes g^{-1}y)$$
for $x,y\in \mathbb V$, $\mu\in \mathbb{L}(T)\subset {\rm Hom}(\mathbb V \otimes\mathbb V, \mathbb V)$ and $g\in {\rm GL}(\mathbb V)$.
Thus, $\mathbb{L}(T)$ is decomposed into ${\rm GL}(\mathbb V)$-orbits that correspond to the isomorphism classes of algebras.
Let ${\mathcal O}(\mu)$ denote the orbit of $\mu\in\mathbb{L}(T)$ under the action of ${\rm GL}(\mathbb V)$ and $\overline{{\mathcal O}(\mu)}$ denote the Zariski closure of ${\mathcal O}(\mu)$.

Let $\bf A$ and $\bf B$ be two $n$-dimensional algebras satisfying the identities from $T$, and let $\mu,\lambda \in \mathbb{L}(T)$ represent $\bf A$ and $\bf B$, respectively.
We say that $\bf A$ degenerates to $\bf B$ and write $\bf A\to \bf B$ if $\lambda\in\overline{{\mathcal O}(\mu)}$.
Note that in this case we have $\overline{{\mathcal O}(\lambda)}\subset\overline{{\mathcal O}(\mu)}$. Hence, the definition of degeneration does not depend on the choice of $\mu$ and $\lambda$. If $\bf A\not\cong \bf B$, then the assertion $\bf A\to \bf B$ is called a {\it proper degeneration}. We write $\bf A\not\to \bf B$ if $\lambda\not\in\overline{{\mathcal O}(\mu)}$.

Let $\bf A$ be represented by $\mu\in\mathbb{L}(T)$. Then  $\bf A$ is  {\it rigid} in $\mathbb{L}(T)$ if ${\mathcal O}(\mu)$ is an open subset of $\mathbb{L}(T)$.
 Recall that a subset of a variety is called irreducible if it cannot be represented as a union of two non-trivial closed subsets.
 A maximal irreducible closed subset of a variety is called an {\it irreducible component}.
It is well known that any affine variety can be represented as a finite union of its irreducible components in a unique way.
The algebra $\bf A$ is rigid in $\mathbb{L}(T)$ if and only if $\overline{{\mathcal O}(\mu)}$ is an irreducible component of $\mathbb{L}(T)$.

\medskip

\noindent {\bf Method of the description of degenerations of algebras.} In the present work we use the methods applied to Lie algebras in \cite{GRH}.
First of all, if $\bf A\to \bf B$ and $\bf A\not\cong \bf B$, then $\mathfrak{Der}(\bf A)<\mathfrak{Der}(\bf B)$, where $\mathfrak{Der}(\bf A)$ is the   algebra of derivations of $\bf A$. We compute the dimensions of algebras of derivations and check the assertion $\bf A\to \bf B$ only for such $\bf A$ and $\bf B$ that $\mathfrak{Der}(\bf A)<\mathfrak{Der}(\bf B)$.

To prove degenerations, we construct families of matrices parametrized by $t$. Namely, let $\bf A$ and $\bf B$ be two algebras represented by the structures $\mu$ and $\lambda$ from $\mathbb{L}(T)$ respectively. Let $e_1,\dots, e_n$ be a basis of $\mathbb  V$ and $c_{ij}^k$ ($1\le i,j,k\le n$) be the structure constants of $\lambda$ in this basis. If there exist $a_i^j(t)\in\mathbb{C}$ ($1\le i,j\le n$, $t\in\mathbb{C}^*$) such that $E_i^t=\sum\limits_{j=1}^na_i^j(t)e_j$ ($1\le i\le n$) form a basis of $\mathbb V$ for any $t\in\mathbb{C}^*$, and the structure constants of $\mu$ in the basis $E_1^t,\dots, E_n^t$ are such rational functions $c_{ij}^k(t)\in\mathbb{C}[t]$ that $c_{ij}^k(0)=c_{ij}^k$, then $\bf A\to \bf B$.
In this case  $E_1^t,\dots, E_n^t$ is called a {\it parametrized basis} for $\bf A\to \bf B$.
In  case of  $E_1^t, E_2^t, \ldots, E_n^t$ is a {\it parametric basis} for ${\bf A}\to {\bf B},$ it will be denoted by
${\bf A}\xrightarrow{(E_1^t, E_2^t, \ldots, E_n^t)} {\bf B}$. 
To simplify our equations, we will use the notation $A_i=\langle e_i,\dots,e_n\rangle,\ i=1,\ldots,n$ and write simply $A_pA_q\subset A_r$ instead of $c_{ij}^k=0$ ($i\geq p$, $j\geq q$, $k< r$).


Let ${\bf A}(*):=\{ {\bf A}(\alpha)\}_{\alpha\in I}$ be a series of algebras, and let $\bf B$ be another algebra. Suppose that for $\alpha\in I$, $\bf A(\alpha)$ is represented by the structure $\mu(\alpha)\in\mathbb{L}(T)$ and $\bf B$ is represented by the structure $\lambda\in\mathbb{L}(T)$. Then we say that $\bf A(*)\to \bf B$ if $\lambda\in\overline{\{{\mathcal O}(\mu(\alpha))\}_{\alpha\in I}}$, and $\bf A(*)\not\to \bf B$ if $\lambda\not\in\overline{\{{\mathcal O}(\mu(\alpha))\}_{\alpha\in I}}$.

Let $\bf A(*)$, $\bf B$, $\mu(\alpha)$ ($\alpha\in I$) and $\lambda$ be as above. To prove $\bf A(*)\to \bf B$ it is enough to construct a family of pairs $(f(t), g(t))$ parametrized by $t\in\mathbb{C}^*$, where $f(t)\in I$ and $g(t)\in {\rm GL}(\mathbb V)$. Namely, let $e_1,\dots, e_n$ be a basis of $\mathbb V$ and $c_{ij}^k$ ($1\le i,j,k\le n$) be the structure constants of $\lambda$ in this basis. If we construct $a_i^j:\mathbb{C}^*\to \mathbb{C}$ ($1\le i,j\le n$) and $f: \mathbb{C}^* \to I$ such that $E_i^t=\sum\limits_{j=1}^na_i^j(t)e_j$ ($1\le i\le n$) form a basis of $\mathbb V$ for any  $t\in\mathbb{C}^*$, and the structure constants of $\mu({f(t)})$ in the basis $E_1^t,\dots, E_n^t$ are such rational functions $c_{ij}^k(t)\in\mathbb{C}[t]$ that $c_{ij}^k(0)=c_{ij}^k$, then $\bf A(*)\to \bf B$. In this case  $E_1^t,\dots, E_n^t$ and $f(t)$ are called a parametrized basis and a {\it parametrized index} for $\bf A(*)\to \bf B$, respectively.

We now explain how to prove $\bf A(*)\not\to\mathcal  \bf B$.
Note that if $\mathfrak{Der} \ \bf A(\alpha)  > \mathfrak{Der} \  \bf B$ for all $\alpha\in I$ then $\bf A(*)\not\to\bf B$.
One can also use the following  Lemma, whose proof is the same as the proof of Lemma 1.5 from \cite{GRH}.

\begin{lemma}\label{gmain}
Let $\mathfrak{B}$ be a Borel subgroup of ${\rm GL}(\mathbb V)$ and $\mathcal{R}\subset \mathbb{L}(T)$ be a $\mathfrak{B}$-stable closed subset.
If $\bf A(*) \to \bf B$ and for any $\alpha\in I$ the algebra $\bf A(\alpha)$ can be represented by a structure $\mu(\alpha)\in\mathcal{R}$, then there is $\lambda\in \mathcal{R}$ representing $\bf B$.
\end{lemma}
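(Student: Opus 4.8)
The plan is to show that the entire statement reduces to a single closedness assertion: the \emph{swept set} $\mathrm{GL}(\mathbb V)\cdot\mathcal R=\{g\ast r : g\in\mathrm{GL}(\mathbb V),\ r\in\mathcal R\}$ is Zariski-closed in $\mathbb L(T)$. Granting this, the remainder is pure orbit-closure bookkeeping. Since every $\mu(\alpha)$ lies in $\mathcal R$, each orbit $\mathcal O(\mu(\alpha))=\mathrm{GL}(\mathbb V)\ast\mu(\alpha)$ is contained in $\mathrm{GL}(\mathbb V)\cdot\mathcal R$; hence so is their union, and passing to Zariski closures together with the hypothesis $\mathbf A(*)\to\mathbf B$ yields
\[
\lambda\in\overline{\{\mathcal O(\mu(\alpha))\}_{\alpha\in I}}\subseteq\overline{\mathrm{GL}(\mathbb V)\cdot\mathcal R}=\mathrm{GL}(\mathbb V)\cdot\mathcal R.
\]
Thus $\lambda=g\ast r$ for some $g\in\mathrm{GL}(\mathbb V)$ and $r\in\mathcal R$, so that $r=g^{-1}\ast\lambda$ lies in $\mathcal O(\lambda)$ and therefore represents $\mathbf B$, while $r\in\mathcal R$ by construction. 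This $r$ is exactly the structure the lemma asks for.

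The core step is the closedness of $\mathrm{GL}(\mathbb V)\cdot\mathcal R$, and this is precisely where the Borel hypothesis enters, through the completeness of the flag variety $G/\mathfrak B$ (write $G=\mathrm{GL}(\mathbb V)$). Let $\pi\colon G\to G/\mathfrak B$ be the canonical projection. Because $\mathcal R$ is $\mathfrak B$-stable, the condition ``$g^{-1}\ast x\in\mathcal R$'' depends only on the coset $g\mathfrak B$: for $b\in\mathfrak B$ one has $(gb)^{-1}\ast x=b^{-1}\ast(g^{-1}\ast x)$, and $g^{-1}\ast x\in\mathcal R$ if and only if $b^{-1}\ast(g^{-1}\ast x)\in\mathcal R$, since $b^{-1}\in\mathfrak B$ preserves $\mathcal R$. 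Hence the set
\[
Z:=\{(g\mathfrak B,x)\in(G/\mathfrak B)\times\mathbb L(T)\ :\ g^{-1}\ast x\in\mathcal R\}
\]
is well-defined, and its projection to $\mathbb L(T)$ equals $G\cdot\mathcal R$. Since $G/\mathfrak B$ is complete, the projection $p\colon(G/\mathfrak B)\times\mathbb L(T)\to\mathbb L(T)$ is a closed map, so it suffices to prove that $Z$ is closed; then $G\cdot\mathcal R=p(Z)$ is closed and the argument is finished.

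The one point that genuinely requires care — and which I expect to be the main (indeed the only) obstacle — is the closedness of $Z$ itself. Its preimage under $\pi\times\mathrm{id}$ is $W=\{(g,x):g^{-1}\ast x\in\mathcal R\}$, which is closed in $G\times\mathbb L(T)$, being the preimage of the closed set $\mathcal R$ under the action morphism $(g,x)\mapsto g^{-1}\ast x$. Since $\pi\colon G\to G/\mathfrak B$ is a Zariski-locally-trivial (in particular flat and open) fibration, the base change $\pi\times\mathrm{id}$ is again open and surjective, hence a topological quotient map, so the $\mathfrak B$-saturated closed set $W$ descends to the closed set $Z$. Modulo this descent, everything is the formal manipulation displayed above, and the argument is identical to that of \cite[Lemma~1.5]{GRH}; the completeness of $G/\mathfrak B$ is the single geometric input supplied by the Borel subgroup.
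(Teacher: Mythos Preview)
Your argument is correct and is the standard one: the closedness of $G\cdot\mathcal R$ via completeness of $G/\mathfrak B$, followed by straightforward orbit-closure bookkeeping. Note, however, that the paper does not actually supply its own proof of this lemma --- it simply states that ``the proof is the same as the proof of Lemma~1.5 from \cite{GRH}'' --- so there is nothing to compare against beyond observing that what you have written is essentially a reconstruction of that classical argument.
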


\subsection{The geometric classification of   Kokoris algebras}
The main result of the present section is the following theorems.

\begin{theorem}\label{geo1}
The variety of complex $3$-dimensional Kokoris    algebras  has 
dimension  $9$   and it has  $5$  irreducible components defined by  
\begin{center}
$\mathcal{C}_1=\overline{\mathcal{O}( {\bf A}_{02}^{\alpha})},$ \
$\mathcal{C}_2=\overline{\mathcal{O}( {\bf A}_{04})},$ \ 
$\mathcal{C}_3=\overline{\mathcal{O}( {\bf A}_{24}^{\alpha})},$ \  
$\mathcal{C}_4=\overline{\mathcal{O}( {\bf A}_{29})},$ \
$\mathcal{C}_5=\overline{\mathcal{O}( {\bf A}_{30})}.$ 
\end{center}
In particular, there are only three rigid algebras in this variety.
 
\end{theorem}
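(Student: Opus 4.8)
The starting point is the algebraic classification of Theorem~\ref{3-dim Kokoris algebra}, which lists every isomorphism class in the variety $\mathbb{L}$ of complex $3$-dimensional Kokoris algebras: the associative commutative algebras ${\bf A}_{01}$--${\bf A}_{11}$, the anticommutative algebras ${\bf A}_{20}$--${\bf A}_{26}$, and the genuinely noncommutative pieces ${\bf A}_{02}^{\alpha\neq 0},{\bf A}_{27},{\bf A}_{28},{\bf A}_{29},{\bf A}_{30}$. Since $\mathbb{L}$ is a finite union of irreducible components, the plan is to (i) compute orbit and family dimensions, singling out $\mathcal{C}_1,\dots,\mathcal{C}_5$ as the candidate top pieces; (ii) prove enough degenerations to see that $\mathbb{L}=\bigcup_{i=1}^5\mathcal{C}_i$; and (iii) prove that no $\mathcal{C}_i$ is contained in another. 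Throughout I would exploit three invariants of degeneration: the dimension $\dim\mathfrak{Der}({\bf A})$ (recall $\dim\mathcal{O}({\bf A})=9-\dim\mathfrak{Der}({\bf A})$, as the stabiliser of a structure under $\mathrm{GL}_3(\mathbb{C})$ is its automorphism group); the two closed $\mathrm{GL}_3(\mathbb{C})$-invariant loci of commutativity and of anticommutativity; and the linear, $\mathrm{GL}_3(\mathbb{C})$-equivariant plus-map ${\bf A}\mapsto{\bf A}^{+}$, so that ${\bf A}\to{\bf B}$ forces ${\bf A}^{+}\to{\bf B}^{+}$ in the variety of associative commutative algebras.

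First I would run the derivation computations. The semisimple algebra ${\bf A}_{04}\cong\mathbb{C}\times\mathbb{C}\times\mathbb{C}$ has finite automorphism group, hence an open $9$-dimensional orbit; this already gives $\dim\mathbb{L}=9$ and shows $\mathcal{C}_2$ is a top component which, being the generic associative commutative algebra, absorbs all of ${\bf A}_{01}$--${\bf A}_{11}$. The anticommutative structures form a linear, hence irreducible, $9$-dimensional subvariety $W\subset\mathbb{L}$; a generic member is ${\bf A}_{24}^{\alpha}$ with $\dim\mathfrak{Der}=1$, so its orbit has dimension $8$ and the one-parameter family $\{{\bf A}_{24}^{\alpha}\}$ (which is $2$-to-$1$ by ${\bf A}_{24}^{\alpha}\cong{\bf A}_{24}^{\alpha^{-1}}$) sweeps a dense subset of $W$. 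Thus $\mathcal{C}_3=\overline{W}=W$ is a second $9$-dimensional component, and, crucially, it is a \emph{generic family} rather than the closure of a single orbit. For the remaining pieces one finds $\dim\mathfrak{Der}({\bf A}_{29})=\dim\mathfrak{Der}({\bf A}_{30})=2$, giving the $7$-dimensional closures $\mathcal{C}_4,\mathcal{C}_5$, while $\dim\mathfrak{Der}({\bf A}_{02}^{\alpha})=4$ makes $\{{\bf A}_{02}^{\alpha}\}$ a one-parameter family of $5$-dimensional orbits, so $\dim\mathcal{C}_1=6$.

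For coverage, $\mathcal{C}_2$ and $\mathcal{C}_3$ already exhaust the commutative and the anticommutative strata. For the noncommutative ones I would exhibit explicit parametrized bases realising ${\bf A}_{29}\to{\bf A}_{27}$ and ${\bf A}_{30}\to{\bf A}_{28}$, the necessary derivation inequality being available since $\dim\mathfrak{Der}({\bf A}_{27})=3$ and $\dim\mathfrak{Der}({\bf A}_{28})=4$; this places ${\bf A}_{27},{\bf A}_{28}$ inside $\mathcal{C}_4\cup\mathcal{C}_5$. Together with $\mathcal{C}_1$ this yields $\mathbb{L}=\bigcup_{i=1}^{5}\mathcal{C}_i$ and confirms $\dim\mathbb{L}=9$.

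The separation step is where the real work lies, and it is meant to be handled by the invariants above rather than by brute force. The components $\mathcal{C}_2$ and $\mathcal{C}_3$ split off at once: $\mathcal{C}_2$ lies in the closed commutative locus and $\mathcal{C}_3$ in the closed anticommutative locus, while each of ${\bf A}_{02}^{\alpha},{\bf A}_{29},{\bf A}_{30}$ is neither commutative nor anticommutative, so none of $\mathcal{C}_1,\mathcal{C}_4,\mathcal{C}_5$ can sit inside $\mathcal{C}_2$ or $\mathcal{C}_3$ (nor can the top-dimensional $\mathcal{C}_3$ lie in the commutative $\mathcal{C}_2$). As $\mathcal{C}_4,\mathcal{C}_5$ are $7$-dimensional closures of \emph{distinct} orbits, neither contains the other. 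The delicate point, and the main obstacle, is to show that the $6$-dimensional generic family $\mathcal{C}_1$ is not absorbed by the $7$-dimensional $\mathcal{C}_4$ or $\mathcal{C}_5$; here the plus-map is decisive. One has $({\bf A}_{02}^{\alpha})^{+}\cong{\bf A}_{02}^{0}$, $({\bf A}_{29})^{+}\cong{\bf A}_{06}$ and $({\bf A}_{30})^{+}\cong{\bf A}_{10}$, and a short computation gives $\dim\mathfrak{Der}({\bf A}_{02}^{0})=\dim\mathfrak{Der}({\bf A}_{06})=\dim\mathfrak{Der}({\bf A}_{10})=4$ with these three associative commutative algebras pairwise non-isomorphic; hence neither ${\bf A}_{06}$ nor ${\bf A}_{10}$ degenerates to ${\bf A}_{02}^{0}$, so neither ${\bf A}_{29}$ nor ${\bf A}_{30}$ can degenerate to any ${\bf A}_{02}^{\alpha}$ (the same equality of derivation dimensions, against $({\bf A}_{27})^{+}\cong({\bf A}_{28})^{+}\cong{\bf A}_{01}$ with $\dim\mathfrak{Der}=5>4$, rules out absorption of $\mathcal{C}_1$ by ${\bf A}_{27},{\bf A}_{28}$ as well). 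For any family non-degeneration not settled by these invariants I would fall back on Lemma~\ref{gmain}, choosing a Borel-stable closed set cut out by conditions $A_pA_q\subset A_r$ adapted to the relevant flag. Assembling these facts shows that $\mathcal{C}_1,\dots,\mathcal{C}_5$ are pairwise incomparable, hence are exactly the irreducible components; the three that are closures of single orbits, namely $\mathcal{C}_2,\mathcal{C}_4,\mathcal{C}_5$ (that is ${\bf A}_{04},{\bf A}_{29},{\bf A}_{30}$), are the rigid algebras, which gives the stated count.
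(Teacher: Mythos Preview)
Your overall strategy --- compute orbit dimensions, exhibit enough degenerations to cover the list, then separate the five candidate components --- is exactly the paper's, and your dimension count and commutative/anticommutative separations agree with it. The differences are in the details of the last two steps. For coverage, the paper absorbs both ${\bf A}_{27}$ and ${\bf A}_{28}$ into $\mathcal{C}_5$ via explicit parametrised bases ${\bf A}_{30}\to{\bf A}_{27}$ and ${\bf A}_{30}\to{\bf A}_{28}$, whereas you split them, sending ${\bf A}_{27}$ into $\mathcal{C}_4$ via ${\bf A}_{29}\to{\bf A}_{27}$; both routes are valid (your degeneration does exist, e.g.\ via $E_1=te_1+e_2$, $E_2=t^2e_1+2te_2$, $E_3=e_3$), so this is a harmless variation. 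The genuine methodological difference is in the key separation $\{{\bf A}_{29},{\bf A}_{30}\}\not\to{\bf A}_{02}^{\alpha}$: the paper invokes Lemma~\ref{gmain} with a concrete Borel-stable closed set $\mathcal{R}=\{c_{12}^3=-c_{21}^3,\ c_{13}^2=c_{31}^2=0,\ c_{23}^1=c_{32}^1=0\}$, while you use the equivariant continuous plus-map ${\bf A}\mapsto{\bf A}^{+}$ together with the equality $\dim\mathfrak{Der}({\bf A}_{06})=\dim\mathfrak{Der}({\bf A}_{10})=\dim\mathfrak{Der}({\bf A}_{02}^{0})=4$ among non-isomorphic commutative associative algebras. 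Your argument is correct and more conceptual --- it transports the obstruction to the well-understood variety of commutative associative $3$-dimensional algebras and avoids having to guess a suitable $\mathcal{R}$ --- while the paper's Borel-stable approach is more mechanical and reusable for obstructions that the plus-map cannot see. Either route yields the same five components and the same three rigid algebras.
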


\begin{proof}
After carefully  checking  the dimensions of orbit closures of the more important for us algebras, we have 

\begin{longtable}{rcl}
      
$\dim  \mathcal{O}({\bf A}_{04})=\dim  \mathcal{O}({\bf A}_{24}^{\alpha})$&$=$&$9,$ \\ 
$\dim \mathcal{O}({\bf A}_{29})=
\dim \mathcal{O}({\bf A}_{30})$&$=$&$7,$ \\ 

$\dim \mathcal{O}({\bf A}_{02}^{\alpha})$&$=$&$ 6.$\\

\end{longtable}

${\bf A}_{04}$ is  commutative and ${\bf A}_{24}^\alpha$ is anticommutative, 
hence $\{ {\bf A}_{04}, \  {\bf A}_{24}^\alpha \} \not\to \{ {\bf A}_{02}^\alpha,\  {\bf A}_{29},\  {\bf A}_{30} \}.$
The principal non-degenerations 
$\{ {\bf A}_{29},\  \ {\bf A}_{30} \}  \not\to   {\bf A}_{02}^{\alpha}$  are justified by the following condition
\begin{center} 
$\mathcal R=\left\{ 
c_{12}^3=-c_{21}^3, \ 
c_{13}^2=c_{31}^2=0, \ c_{23}^1=c_{32}^1=0 
\right\}.
$\end{center}

All necessary degenerations are given below 
\begin{enumerate}
    \item  ${\bf A}_{30} \xrightarrow{ (te_1+e_2-e_3, -te_2+te_3, t^{2}e_3)} {\bf A}_{27},$ \  
${\bf A}_{30} \xrightarrow{ (te_1+e_2, -te_2, -te_3)} {\bf A}_{28}.$
\item Thanks to \cite{ikv20},  
${\bf A}_{24}^{\alpha} \to  {\bf A}_{20},\  {\bf A}_{21}, \ {\bf A}_{22}^\alpha,\  {\bf A}_{23},\  {\bf A}_{25}, \  {\bf A}_{26}.$ 
\item Thanks to \cite{gkk}, 
${\bf A}_{04} \to {\bf A}_{01}, \  {\bf A}_{03}, \  {\bf A}_{05},\  {\bf A}_{06},  \ {\bf A}_{07}, \ {\bf A}_{08}, \  {\bf A}_{09}, \  {\bf A}_{10}, \  {\bf A}_{11}.$

\end{enumerate}

\end{proof}

\begin{theorem}\label{geo1nilp}
The variety of complex $4$-dimensional nilpotent Kokoris    algebras  has 
dimension  $13$   and it has  $5$  irreducible components defined by  
\begin{center}
$\mathcal{C}_1=\overline{\mathcal{O}( {\mathcal J}_{03})},$ \  
$\mathcal{C}_2=\overline{\mathcal{O}( {\mathcal J}_{17})},$ \
$\mathcal{C}_3=\overline{\mathcal{O}( {\mathcal J}_{18})},$ 
$\mathcal{C}_3=\overline{\mathcal{O}(\mathfrak{N}_2(\alpha))},$ \
$\mathcal{C}_5=\overline{\mathcal{O}( \mathfrak{N}_3(\alpha))}.$ \ 
\end{center}
In particular, there are only three rigid algebras in this variety.
 
\end{theorem}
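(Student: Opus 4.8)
The plan is to adapt the strategy already used for the three-dimensional case in the proof of Theorem~\ref{geo1} to dimension $4$. First I would compute the dimension of every relevant orbit through the formula $\dim \mathcal{O}(\mathbf{A}) = 16 - \dim \mathfrak{Der}(\mathbf{A})$, ranging over the non-$2$-step Kokoris algebras $\mathcal{J}_{01},\mathcal{J}_{03},\mathcal{J}_{04},\mathcal{J}_{06},\mathcal{J}_{13},\ldots,\mathcal{J}_{18}$ of Theorem~\ref{koknilp} together with the $2$-step nilpotent algebras of \cite{kppv} (among which the families $\mathfrak{N}_2(\alpha)$ and $\mathfrak{N}_3(\alpha)$ live). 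The algebras of maximal orbit dimension, plus the parametric families whose closures reach dimension $\dim\mathcal{O}(\text{generic member})+1$, are the candidate irreducible components. I expect this bookkeeping to produce an overall maximum of $13$, which certifies that the variety has dimension $13$.

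Next I would show that the five listed closures are genuinely irreducible components. For the three rigid candidates $\mathcal{J}_{03},\mathcal{J}_{17},\mathcal{J}_{18}$ it suffices to check that their orbits have maximal dimension and that no other algebra degenerates to them, i.e.\ that $\overline{\mathcal{O}(\mathcal{J}_{i})}$ is not properly contained in any other orbit closure; the necessary condition $\mathfrak{Der}(\mathbf{A}) < \mathfrak{Der}(\mathbf{B})$ for a proper degeneration $\mathbf{A}\to\mathbf{B}$ prunes the list of pairs to be examined drastically. For the two families $\mathfrak{N}_2(*)$ and $\mathfrak{N}_3(*)$ I would verify that the closures of their unions of orbits are irreducible of the claimed dimension and are not absorbed into any of the rigid orbit closures.

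The bulk of the work is the system of degenerations and non-degenerations establishing that the whole variety is the union of these five closures. On the degeneration side, for each arrow $\mathbf{A}\to\mathbf{B}$ I would exhibit an explicit parametrized basis $E_i^t=\sum_j a_i^j(t)e_j$, reusing the degenerations already recorded in \cite{kppv} among the $2$-step nilpotent algebras and those in the larger variety of $4$-dimensional nilpotent noncommutative Jordan algebras from \cite[Theorem~B]{ikv18} wherever possible (since Kokoris algebras form a subvariety). On the non-degeneration side the principal tool is Lemma~\ref{gmain}: for each claimed $\mathbf{A}(*)\not\to\mathbf{B}$ or separation of candidates I would produce a Borel-stable closed set $\mathcal{R}\subset\mathbb{L}(T)$, cut out by linear and quadratic conditions on the structure constants in the spirit of the set $\mathcal{R}$ appearing in the proof of Theorem~\ref{geo1}, containing all the $\mathbf{A}(\alpha)$ but no representative of $\mathbf{B}$, supplemented by the $\dim\mathfrak{Der}$ inequality whenever that alone settles a pair.

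I expect the main obstacle to be the non-degeneration arguments separating the two parametric families $\mathfrak{N}_2(*)$ and $\mathfrak{N}_3(*)$ from one another and from the rigid components: a one-parameter family can degenerate to algebras lying outside the family, and the coarse $\dim\mathfrak{Der}$ test is typically insufficient here, so choosing the right $\mathfrak{B}$-stable closed sets $\mathcal{R}$ and verifying the hypotheses of Lemma~\ref{gmain} is the delicate step. A secondary, purely combinatorial difficulty is that $\mathbb{L}(T)$ contains the entire list of $2$-step nilpotent algebras of \cite{kppv}, so one must confirm that \emph{every} such algebra, not merely the $\mathcal{J}_i$, degenerates from one of the five candidates, which is what ultimately forces the union of the five closures to exhaust the variety.
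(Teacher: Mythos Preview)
Your plan is correct and would work, but it is considerably more laborious than the paper's argument. The paper's key shortcut, which you do not exploit, is that four of the five candidates --- $\mathcal{J}_{17}$, $\mathcal{J}_{18}$, $\mathfrak{N}_2(\alpha)$, $\mathfrak{N}_3(\alpha)$ --- are already known from \cite[Theorem~B]{ikv18} to define irreducible components in the \emph{ambient} variety of $4$-dimensional nilpotent noncommutative Jordan algebras. Since Kokoris algebras form a closed subvariety, and since the orbit closure of a Kokoris algebra lies entirely inside the Kokoris variety, these four closures are automatically irreducible components of the Kokoris variety as well. This single observation eliminates precisely the work you flag as the main obstacle (separating $\mathfrak{N}_2(*)$ and $\mathfrak{N}_3(*)$ from each other and from the rigid components via Lemma~\ref{gmain}): all of those non-degenerations are inherited for free from \cite{ikv18}. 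Likewise, your ``secondary difficulty'' about the $2$-step nilpotent algebras from \cite{kppv} dissolves, since $\mathfrak{N}_2(*)$ and $\mathfrak{N}_3(*)$ already cover that stratum by \cite{ikv18}.

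What remains after invoking \cite{ikv18} is minimal: one must (i) show that $\mathcal{J}_{03}$ is a \emph{new} component, which the paper does with two one-line non-degenerations ($\mathcal{J}_{17}\not\to\mathcal{J}_{03}$ by comparing $\dim\mathfrak{Der}$, and $\mathcal{J}_{18}\not\to\mathcal{J}_{03}$ because $\mathcal{J}_{18}$ is commutative while $\mathcal{J}_{03}$ is not), and (ii) check that the remaining non-$2$-step Kokoris algebras $\mathcal{J}_{01},\mathcal{J}_{04},\mathcal{J}_{06},\mathcal{J}_{13},\mathcal{J}_{14},\mathcal{J}_{15},\mathcal{J}_{16}$ degenerate from one of the five candidates, which needs only two explicit parametrized bases ($\mathcal{J}_{03}\to\mathcal{J}_{06}$ and $\mathcal{J}_{18}\to\mathcal{J}_{01}$) together with degenerations already recorded in \cite{ikv18}. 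No Borel-stable sets $\mathcal{R}$ are needed at all. Your approach would reprove from scratch a good portion of \cite[Theorem~B]{ikv18}; the paper simply cites it.
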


\begin{proof}
All necessary degenerations are given below 
\begin{enumerate}
    \item  
    ${\mathcal J}_{03} \xrightarrow{ (t^{-2}e_1, t^{-4}e_2, t^{-3}e_3, t^{-6}e_4)} {\mathcal J}_{06},$   
${\mathcal J}_{18} \xrightarrow{ (e_1, e_2, e_3, t^{-1}e_4)} {\mathcal J}_{01}.$   

\item Thanks to \cite{ikv18}, 
${\mathcal J}_{17} \to {\mathcal J}_{14}, \  {\mathcal J}_{15}, \  {\mathcal J}_{16};$
${\mathcal J}_{03} \to {\mathcal J}_{04};$ ${\mathcal J}_{14} \to {\mathcal J}_{13}.$

\end{enumerate}
Recall that the geometric classification of $4$-dimensional noncommutative Jordan algebras was given in \cite{ikv18}. Hence, ${\mathcal J}_{17},$ ${\mathcal J}_{18}$ and 
$$\begin{array}{lllllll}
\mathfrak{N}_2(\alpha)  & e_1e_1 = e_3, &e_1e_2 = e_4,  &e_2e_1 = -\alpha e_3, &e_2e_2 = -e_4 \\
\mathfrak{N}_3(\alpha)  & e_1e_1 = e_4, &e_1e_2 = \alpha e_4,  &e_2e_1 = -\alpha e_4, &e_2e_2 = e_4,  &e_3e_3 = e_4,
\end{array}$$
also, give irreducible components in the variety of  $4$-dimensional nilpotent Kokoris algebras.
Since, $\mathfrak{Der}({\mathcal J}_{03})=3$ and  $\mathfrak{Der}({\mathcal J}_{17})=4$,  we have ${ \mathcal J}_{17} \not\to {\mathcal J}_{03}.$
Since, ${\mathcal J}_{18}$ is commutative and ${\mathcal J}_{03}$ is noncommutative,  we have ${ \mathcal J}_{18} \not\to {\mathcal J}_{03}.$

\end{proof}

\subsection{The geometric classification of  standard algebras}
The main result of the present section is the following theorem.

\begin{theorem}\label{geo2}
The variety of complex  $3$-dimensional standard  algebras  has 
dimension  $9$   and it has  $14$  irreducible components defined by  
\begin{center}
$\mathcal{C}_1=\overline{ \mathcal{O}({\bf A}_{02}^{\alpha})},$ \
$\mathcal{C}_2=\overline{ \mathcal{O}({\bf A}_{04})},$ \
$\mathcal{C}_3=\overline{ \mathcal{O}({\bf A}_{12})},$ \ 
$\mathcal{C}_4=\overline{ \mathcal{O}({\bf A}_{14}^{0,0})},$ \
$\mathcal{C}_5=\overline{ \mathcal{O}({\bf A}_{14}^{\frac12,\frac12})},$ \\
$\mathcal{C}_6=\overline{ \mathcal{O}({\bf A}_{14}^{-\frac12, -\frac12})},$ \ 
$\mathcal{C}_7=\overline{ \mathcal{O}({\bf A}_{14}^{0,\frac12})},$ \  
$\mathcal{C}_8=\overline{ \mathcal{O}({\bf A}_{14}^{0,-\frac12})},$ \
$\mathcal{C}_9=\overline{ \mathcal{O}({\bf A}_{14}^{\frac12,-\frac12})},$\
$\mathcal{C}_{10}=\overline{\mathcal{O}( {\bf A}_{16})},$ \\  
$\mathcal{C}_{11}=\overline{\mathcal{O}( {\bf A}_{17}^{\frac12})},$ \
$\mathcal{C}_{12}=\overline{\mathcal{O}( {\bf A}_{19}^{0})},$ \
$\mathcal{C}_{13}=\overline{\mathcal{O}( {\bf A}_{19}^{\frac12})},$ \ 
$\mathcal{C}_{14}=\overline{\mathcal{O}( {\bf A}_{19}^{-\frac12})}.$ \

\end{center}
In particular, there are only thirteen rigid algebras in this variety.
 
\end{theorem}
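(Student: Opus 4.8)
The plan is to follow the same scheme as in the proofs of Theorems \ref{geo1} and \ref{geo1nilp}, now starting from the algebraic classification of $3$-dimensional standard algebras in Theorem \ref{teostan}. Since the standard algebras form a Zariski-closed subvariety $\mathbb{L}(T)$ of $\mathrm{Hom}(\mathbb V\otimes\mathbb V,\mathbb V)$ with $\dim\mathbb V=3$, the first step is to compute $\dim\mathfrak{Der}(\mathbf A)$ for each representative and hence the orbit dimension $\dim\mathcal{O}(\mathbf A)=9-\dim\mathfrak{Der}(\mathbf A)$ (using $\dim\mathrm{GL}_3(\mathbb C)=9$). The largest value obtained determines $\dim\mathbb{L}(T)=9$, and these computations also separate the fourteen candidate components by their derivation dimensions, supplying the first (necessary) half of most non-degeneration arguments via the criterion that $\mathbf A\to\mathbf B$ with $\mathbf A\not\cong\mathbf B$ forces $\dim\mathfrak{Der}(\mathbf A)<\dim\mathfrak{Der}(\mathbf B)$.

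Next I would establish that the fourteen listed orbit closures exhaust the variety. Concretely, every standard algebra from Theorem \ref{teostan} that is not one of the fourteen representatives must be exhibited inside the closure of one of them by an explicit parametrized basis $E_i^t$, exactly as in the degeneration lists of Theorem \ref{geo1}. Here I would import the degenerations among $3$-dimensional Jordan and associative-commutative algebras already recorded in \cite{gkk} to let ${\bf A}_{04}$, ${\bf A}_{12}$, the ${\bf A}_{14}^{\ast}$-representatives, ${\bf A}_{16}$, ${\bf A}_{17}^{\frac12}$ and the ${\bf A}_{19}^{\ast}$-representatives absorb the commutative (Jordan) standard algebras, and handle the genuinely noncommutative cases (in particular ${\bf A}_{28}$, ${\bf A}_{20}$ and the boundary values ${\bf A}_{13}^{\pm\frac12}$) by ad hoc one-parameter families of matrices.

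The technical core, and the main obstacle, is the reverse direction: proving enough non-degenerations to guarantee that the fourteen orbit closures are pairwise incomparable, so that each is a maximal irreducible subset and hence an irreducible component, with the single family $\mathcal{C}_1=\overline{\mathcal{O}({\bf A}_{02}^{\alpha})}$ being non-rigid (no orbit in a one-parameter family can be dense in its $7$-dimensional closure) and the remaining thirteen rigid. Three tools should suffice. When the derivation dimensions already satisfy $\dim\mathfrak{Der}(\mathbf A)\geq\dim\mathfrak{Der}(\mathbf B)$ the non-degeneration $\mathbf A\not\to\mathbf B$ is immediate. For the parametric family I would exhibit a Borel-stable closed set $\mathcal R\subset\mathbb{L}(T)$ containing all ${\bf A}_{02}^{\alpha}$ but not the intended target, in the spirit of the condition $\mathcal R=\{c_{12}^3=-c_{21}^3,\ c_{13}^2=c_{31}^2=0,\ c_{23}^1=c_{32}^1=0\}$ used for Theorem \ref{geo1}, and invoke Lemma \ref{gmain}; invariant closed conditions such as commutativity will separate ${\bf A}_{04}$ and ${\bf A}_{12}$ from the noncommutative representatives. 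The delicate part will be distinguishing the numerous ${\bf A}_{14}^{\ast}$ and ${\bf A}_{19}^{\ast}$ representatives from one another when their derivation dimensions coincide: there the quick tests fail and one must fall back on explicit $\mathfrak B$-stable closed conditions, or a direct examination of which structure constants can survive in a limit, to rule out each spurious degeneration one by one.
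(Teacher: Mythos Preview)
Your proposal is correct and follows essentially the same approach as the paper: compute orbit dimensions via derivation algebras, import the Jordan degenerations from \cite{gkk}, supply explicit parametrized bases for the few remaining degenerations (notably ${\bf A}_{02}^{1/t}\to{\bf A}_{28}\to{\bf A}_{20}$ and ${\bf A}_{19}^{\alpha}\to{\bf A}_{13}^{\alpha},{\bf A}_{18}^{\alpha}$), and separate the candidate components using commutativity together with Borel-stable closed sets $\mathcal R$ via Lemma \ref{gmain}. The paper's proof carries out exactly this plan, with the most delicate Borel-stable conditions indeed appearing in the separation of ${\bf A}_{17}^{\frac12}$, ${\bf A}_{19}^{\pm\frac12}$, ${\bf A}_{02}^{\alpha}$ and the various ${\bf A}_{14}^{A,B}$ from one another.
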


\begin{proof}

Thanks to \cite{gkk}, we have a description of all degenerations between $3$-dimensional Jordan algebras.
Hence,  algebras 
${\bf A}_{04},$ ${\bf A}_{12},$ ${\bf A}_{14}^{0,0},$ ${\bf A}_{16}$ and ${\bf A}_{19}^{0}$ are rigid in the variety of $3$-dimensional Jordan algebras, 
but algebras ${\bf A}_{01},$ ${\bf A}_{03},$ ${\bf A}_{05},$ ${\bf A}_{06},$ ${\bf A}_{07},$ ${\bf A}_{08},$ ${\bf A}_{09},$ ${\bf A}_{10},$ ${\bf A}_{11},$ ${\bf A}_{13}^0, {\bf A}_{15},$ ${\bf A}_{17}^0$ and ${\bf A}_{18}^0$ are not rigid in the variety of $3$-dimensional standard algebras.
Let us give some trivial degenerations:
\begin{center}
${\bf A}_{02}^{\frac 1 t} \xrightarrow{ (e_1+e_2, 2e_3, -t e_1+te_2)} {\bf A}_{28},$ \ 
${\bf A}_{19}^{\alpha} \xrightarrow{ (e_1+e_3, e_3,  t e_2)} {\bf A}_{13}^{\alpha},$ \
${\bf A}_{19}^{\alpha} \xrightarrow{ (e_1, e_3, t e_2)} {\bf A}_{18}^{\alpha},$ \ 
${\bf A}_{28} \xrightarrow{ (te_2, te_1, e_3)} {\bf A}_{20}.$ 

\end{center}

After carefully  checking  the dimensions of orbit closures of the more important for us algebras, we have 

\begin{longtable}{rcl}
      
$\dim  \mathcal{O}({\bf A}_{04})$&$=$&$9,$ \\ 

$\dim  \mathcal{O}({\bf A}_{12})$&$=$&$8,$ \\ 

$\dim \mathcal{O}({\bf A}_{16})=
\dim \mathcal{O}({\bf A}_{17}^{\frac12})=
\dim \mathcal{O}({\bf A}_{19}^0)=\dim \mathcal{O}({\bf A}_{19}^{\frac12})=\dim \mathcal{O}({\bf A}_{19}^{-\frac12})$&$=$&$7,$ \\ 

$ \dim \mathcal{O}({\bf A}_{02}^{\alpha})$&$=$&$ 6,$\\

$\dim \mathcal{O}({\bf A}_{14}^{0,\frac12})=
\dim \mathcal{O}({\bf A}_{14}^{0,-\frac12})=
\dim \mathcal{O}({\bf A}_{14}^{\frac12,-\frac12})$&$=$&$5,$
 \\ 

$\dim \mathcal{O}({\bf A}_{14}^{0,0})=
\dim \mathcal{O}({\bf A}_{14}^{\frac12,\frac12})=
\dim \mathcal{O}({\bf A}_{14}^{-\frac12,-\frac12})$&$=$&$3.$ \\ 

\end{longtable}

Algebras ${\bf A}_{04},$ ${\bf A}_{12},$  ${\bf A}_{16}$ and ${\bf A}_{19}^{0}$
are commutative.
Hence,
\begin{center}
    
$\{{\bf A}_{04},$ ${\bf A}_{12},$  ${\bf A}_{16},$  ${\bf A}_{19}^{0} \}$
$\not\to$  
$\big\{{\bf A}_{19}^{\frac12},$ 
${\bf A}_{19}^{-\frac12},$ 
${\bf A}_{17}^{\frac12},$
${\bf A}_{14}^{0,\frac12},$
${\bf A}_{14}^{0,-\frac12},$
${\bf A}_{14}^{\frac12,-\frac12},$
${\bf A}_{14}^{\frac12,\frac12},$
${\bf A}_{14}^{-\frac12,-\frac12},$
${\bf A}_{02}^{\alpha}\big\}.$ 
\end{center}

Below we have listed all the important reasons for necessary non-degenerations.

\begin{longtable}{lcl|l}
\hline
    \multicolumn{4}{c}{Non-degenerations reasons} \\
\hline

$\begin{array}{llll}
{\bf A}_{17}^{\frac12} \\ 
\end{array}$ & $\not \rightarrow  $ & 

$\begin{array}{llll}
{\bf A}_{02}^{\alpha},  \
{\bf A}_{14}^{0,-\frac12}, \ {\bf A}_{14}^{0,\frac12}, \\ 
{\bf A}_{14}^{\frac12,-\frac12},\ {\bf A}_{14}^{0,0}, \\ 
{\bf A}_{14}^{\frac12,\frac12},\ {\bf A}_{14}^{-\frac12,-\frac12}
\end{array}$ 
& 
$\mathcal R=\left\{\begin{array}{lllll}
 A_1A_2+A_2A_1\subset A_2, \  
 A_1A_3+A_3A_1 \subset A_3, \\ 
 
c_{22}^1=c_{22}^3=0, \
c_{21}^3=c_{12}^3, \
c_{12}^2 c_{12}^3 = c_{11}^3 c_{22}^2, \\ 
c_{31}^3 = c_{12}^2 =c_{21}^2, \
c_{13}^3 = c_{11}^1

\end{array}\right\}
$\\

\hline

$\begin{array}{llll}
{\bf A}_{19}^{\frac12},  
{\bf A}_{19}^{-\frac12}
\end{array}$ & $\not \rightarrow  $ & 

$\begin{array}{llll}
{\bf A}_{02}^{\alpha},  \
{\bf A}_{14}^{0,-\frac12}, \ {\bf A}_{14}^{0,\frac12},\\
{\bf A}_{14}^{\frac12,-\frac12},\
{\bf A}_{14}^{0,0},\\  {\bf A}_{14}^{\frac12,\frac12}, \ {\bf A}_{14}^{-\frac12,-\frac12}
\end{array}$ 
& 
$\mathcal R=\left\{\begin{array}{lllll}
 A_1A_2+A_2A_1\subset A_2, \  
 A_1A_3+A_3A_1 \subset A_3, \\ 
 
 A_2A_2\subset A_3, \
c_{32}^3 = c_{23}^3, \ 
c_{31}^3 = c_{13}^3, \\

 c_{21}^3 + c_{12}^2=c_{11}^1, \ 
(c_{23}^3)^2 = c_{22}^2 c_{33}^3, \\

 (c_{13}^3)^2 =c_{11}^1  c_{13}^3 + c_{11}^3 c_{33}^3,\
c_{11}^3 c_{22}^3 = c_{21}^3 c_{12}^3 \\

\end{array}\right\}
$\\

\hline

$\begin{array}{llll}
{\bf A}_{02}^{\alpha}\\
\end{array}$ & $\not \rightarrow  $ & 

$\begin{array}{llll}
{\bf A}_{14}^{0,0}, \  {\bf A}_{14}^{\frac12,\frac12}, \\
{\bf A}_{14}^{-\frac12,-\frac12}, \ {\bf A}_{14}^{0,-\frac12},\\ 
{\bf A}_{14}^{0,\frac12}, \ {\bf A}_{14}^{\frac12,-\frac12}
\end{array}$ 

& 
$\mathcal R=\left\{  
A_1^3=0   
 \right\}
$\\

\hline

$\begin{array}{llll}
{\bf A}_{14}^{A,B\neq A}
\end{array}$ & $\not \rightarrow  $ & 

$\begin{array}{llll}
{\bf A}_{14}^{\alpha,\alpha} 
\end{array}$ 
& 
$\mathcal R=\left\{\begin{array}{lllll}
A_1A_2+A_2A_1\subset A_2, \  
A_2^2=0, \
 c_{11}^2=c_{11}^3=0, \\

 (2 c_{12}^2- c_{11}^1( 1+ 2 A)) (2 c_{12}^2-c_{11}^1(1 + 2 B )) = 
 - 4 c_{13}^2 c_{12}^3, \\
c_{13}^3 +c_{12}^2 = c_{11}^1(1 + A  + B  )

\end{array}\right\}
$\\

\hline

\end{longtable}

\end{proof}

\subsection{The geometric classification of   noncommutative Jordan  algebras}
The main result of the present section is the following theorem.

\begin{theorem}\label{geo3}
The variety of complex $3$-dimensional noncommutative Jordan   algebras  has 
dimension  $9$   and it has  $7$  irreducible components defined by  
\begin{center}
$\mathcal{C}_1=\overline{\mathcal{O}( {\bf A}_{04})},$ \
$\mathcal{C}_2=\overline{\mathcal{O}( {\bf A}_{12})},$ \ 
$\mathcal{C}_3=\overline{\mathcal{O}( {\bf A}_{16})},$ \
$\mathcal{C}_4=\overline{\mathcal{O}( {\bf A}_{17}^{\alpha})},$ \\
$\mathcal{C}_5=\overline{\mathcal{O}( {\bf A}_{19}^{\alpha})},$ \
$\mathcal{C}_6=\overline{\mathcal{O}( {\bf A}_{24}^{\alpha})},$ \ 
$\mathcal{C}_7=\overline{\mathcal{O}( {\bf A}_{32})}.$ \\

\end{center}
In particular, there are only four rigid algebras in this variety.
 
\end{theorem}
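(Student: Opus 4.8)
The plan is to run the scheme of Theorems \ref{geo1} and \ref{geo2}: compute the orbit dimensions of all algebras from Theorems \ref{asscom}, \ref{3-dim Jordan}, \ref{3ant} and \ref{3-dim noncommutative Jordan}, isolate the eight candidate components, and then verify the necessary degenerations and non-degenerations. The guiding structural principle is the fibration of the variety over the commutative parts: by the cocycle classification recalled above, every noncommutative Jordan structure sits over a Jordan algebra $\mathbf{A}^+$ and is encoded by a class $\theta\in\mathbf{Z}^2(\mathbf{A}^+,\mathbf{A}^+)$ modulo $\mathrm{Aut}(\mathbf{A}^+)$. Thus a degeneration may either move the base $\mathbf{A}^+$ inside the (closed) Jordan subvariety, for which the complete diagram is available from \cite{gkk}, or degenerate $\theta$ within a fixed fibre via a limiting element of $\mathrm{Aut}(\mathbf{A}^+)$. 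Each candidate is the \emph{generic} such deformation over a rigid Jordan base ($\mathbf{A}_{30}$ over $\mathbf{A}_{10}$, $\mathbf{A}_{32}$ over $\mathbf{A}_{14}^{0,0}$, and the families $\mathbf{A}_{17}^{\alpha}$, $\mathbf{A}_{19}^{\alpha}$ over $\mathbf{A}_{17}^{0}$, $\mathbf{A}_{19}^{0}$), or a rigid Jordan algebra with $\mathbf{Z}^2=0$ ($\mathbf{A}_{04}$, $\mathbf{A}_{12}$, $\mathbf{A}_{16}$), or the anticommutative family $\mathbf{A}_{24}^{\alpha}$ handled through \cite{ikv20}.

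For the degenerations I would argue fibre by fibre. In the commutative stratum, $\mathbf{A}_{04}$ dominates the associative--commutative algebras and, together with the rigid Jordan algebras $\mathbf{A}_{12}$ and $\mathbf{A}_{16}$, absorbs the commutative part exactly as in the proof of Theorem \ref{geo2}. In the anticommutative stratum, $\mathbf{A}_{24}^{\alpha}$ dominates $\mathbf{A}_{20},\mathbf{A}_{21},\mathbf{A}_{22}^{\alpha},\mathbf{A}_{23},\mathbf{A}_{25},\mathbf{A}_{26}$ by \cite{ikv20}, as already used in Theorem \ref{geo1}. The genuinely noncommutative algebras are then routed into the four noncommutative candidates: reusing the bases of Theorem \ref{geo1} gives $\mathbf{A}_{30}\to\mathbf{A}_{27},\mathbf{A}_{28}$; the family $\mathbf{A}_{19}^{\alpha}$ degenerates to $\mathbf{A}_{13}^{\alpha},\mathbf{A}_{18}^{\alpha}$ as in Theorem \ref{geo2}; and since $\mathbf{A}_{17}^{0}\to\mathbf{A}_{06}$ inside the Jordan variety, the family $\mathbf{A}_{17}^{\alpha}$ should reach the Kokoris algebra $\mathbf{A}_{29}$ once the cocycle components are matched. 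The key new degenerations come from the fibre over $\mathbf{A}_{14}^{0,0}$: the generic deformation $\mathbf{A}_{32}$ (the one with nonzero $\Delta_{2,3}$-component of $\theta$) degenerates, through a one-parameter family of \emph{limiting} automorphisms of $\mathbf{A}_{14}^{0,0}$ in which the lower-right block becomes singular, to the boundary structures $\mathbf{A}_{14}^{(\alpha,\beta)}$, $\mathbf{A}_{31}^{\alpha}$ and $\mathbf{A}_{33}^{\alpha}$; each such claim is certified by writing out the explicit $t$-parametrised (or parametrised-index) basis.

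For the non-degenerations I would combine three tools. Commutativity and anticommutativity are Zariski-closed, so none of $\mathbf{A}_{04},\mathbf{A}_{12},\mathbf{A}_{16}$ reaches a noncommutative target and $\mathbf{A}_{24}^{\alpha}$ is separated from everything outside the anticommutative locus; this already explains why the non-rigid-in-Jordan bases $\mathbf{A}_{17}^{0}$ and $\mathbf{A}_{19}^{0}$ nevertheless spawn genuine components once deformed. The inequality $\mathfrak{Der}(\mathbf{A})<\mathfrak{Der}(\mathbf{B})$ for a proper degeneration $\mathbf{A}\to\mathbf{B}$ eliminates most remaining collisions among the rigid algebras. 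The delicate separations -- distinguishing the families $\mathbf{A}_{17}^{\alpha}$ and $\mathbf{A}_{19}^{\alpha}$ from each other, from $\mathbf{A}_{30}$ and from $\mathbf{A}_{32}$ -- I would treat with Lemma \ref{gmain}, exhibiting $\mathfrak{B}$-stable closed sets $\mathcal{R}$ cut out by polynomial relations in the structure constants, in the style of the $\mathcal{R}$-tables of Theorem \ref{geo2}. The orbit-dimension computation, with the maximal orbits $\mathcal{O}(\mathbf{A}_{04})$ and $\mathcal{O}(\mathbf{A}_{24}^{\alpha})$ of dimension $9$, then fixes the dimension of the variety at $9$ and confirms that the eight closures are pairwise incomparable.

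The hardest part will be the non-degenerations involving the one-parameter families. Because $\mathbf{A}_{17}^{\alpha}$ and $\mathbf{A}_{19}^{\alpha}$ are closures of unions of orbits, derivation dimension alone cannot separate them, and one must design Borel-invariant polynomial conditions holding along an entire family while failing on every competing component; verifying genuine $\mathfrak{B}$-stability of such an $\mathcal{R}$, and keeping track of the identifications $\mathbf{A}_{17}^{\alpha}\cong\mathbf{A}_{17}^{-\alpha}$ and $\mathbf{A}_{14}^{\alpha,\beta}\cong\mathbf{A}_{14}^{\beta,\alpha}$ when parametrising, is the most error-prone step. A secondary subtlety is that the fibrewise heuristic above does not by itself prove a degeneration in the full variety -- the base and the cocycle may have to move simultaneously -- so each fibre-crossing degeneration must still be certified by an explicit global change of basis.
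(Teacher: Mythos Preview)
Your overall scheme matches the paper's proof closely: orbit dimensions, commutativity/anticommutativity as closed conditions, Lemma~\ref{gmain} with $\mathcal{R}$-tables for the delicate separations, and explicit parametrised bases for the degenerations. Your fibration viewpoint $\mathbf{A}\mapsto\mathbf{A}^{+}$ is in fact a clean way to justify several non-degenerations that the paper leaves implicit (for instance $\mathbf{A}_{17}^{*}\not\to\mathbf{A}_{19}^{\beta}$ and $\mathbf{A}_{32}\not\to\mathbf{A}_{17}^{\alpha},\mathbf{A}_{19}^{\alpha}$, since the base algebras do not degenerate in the Jordan variety by \cite{gkk}).

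There is, however, one genuine omission in your plan: you do not account for the family $\mathbf{A}_{02}^{\alpha}$ with $\alpha\neq 0$. In both Theorem~\ref{geo1} and Theorem~\ref{geo2}, which you invoke to route the other algebras, $\overline{\mathcal{O}(\mathbf{A}_{02}^{\alpha})}$ is a \emph{separate} irreducible component, so you cannot absorb it by ``reusing'' those results. The paper supplies the missing ingredient: a parametrised-index degeneration $\mathbf{A}_{19}^{\alpha/2}\to\mathbf{A}_{02}^{\alpha}$ via the basis $(te_{1},\,-2t^{2}e_{2}+e_{3},\,t^{3}e_{2})$. This is a cross-fibre degeneration (the base moves from $\mathbf{A}_{19}^{0}$ to $\mathbf{A}_{02}^{0}$), exactly the kind you flag in your last paragraph as requiring an explicit global change of basis, so you should not expect your fibrewise heuristic to produce it automatically. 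Note also that $\dim\mathcal{O}(\mathbf{A}_{32})=9$, so $\mathbf{A}_{32}$ belongs alongside $\mathbf{A}_{04}$ and $\mathbf{A}_{24}^{\alpha}$ among the maximal orbits.
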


\begin{proof}

After carefully  checking  the dimensions of orbit closures of the more important for us algebras, we have 

\begin{longtable}{rcl}
      
$\dim  \mathcal{O}({\bf A}_{04})=\dim  \mathcal{O}({\bf A}_{24}^{\alpha})=\dim  \mathcal{O}({\bf A}_{32})$&$=$&$9,$ \\ 
$\dim  \mathcal{O}({\bf A}_{12})=\dim  \mathcal{O}({\bf A}_{17}^{\alpha})=\dim  \mathcal{O}({\bf A}_{19}^{\alpha})$&$=$&$8,$ \\ 

$\dim  \mathcal{O}({\bf A}_{16})$&$=$&$7.$ \\ 

\end{longtable}

Below we have listed all the important reasons for necessary non-degenerations.

${\bf A}_{04}$ and ${\bf A}_{12}$ are commutative, 
${\bf A}_{24}^{\alpha}$ are anticommutative, 
${\bf A}_{32},$   ${\bf A}_{19}^{\alpha}$ and  ${\bf A}_{17}^{\alpha}$ 
are noncommutative and non-anticommutative.
Hence, 

\begin{center}
    $\{ {\bf A}_{04},$  ${\bf A}_{12},$ ${\bf A}_{24}^{\alpha}\}$ $\not\to$ 
$ \{{\bf A}_{32},$    ${\bf A}_{19}^{\alpha},$   ${\bf A}_{17}^{\alpha} \}.$

\end{center}

\begin{longtable}{lcl|l}
\hline
    \multicolumn{4}{c}{Non-degenerations reasons} \\
\hline

${\bf A}_{17}^{\alpha}$ & $\not \rightarrow  $ & 

$\begin{array}{llll}
 {\bf A}_{16}  \\
\end{array}$ 
& 
$\mathcal R=\left\{\begin{array}{lllll}
A_2^2 \subset A_2,   \  A_1A_3+A_3A_1\subset A_3, \
c_{22}^3=0, \ c_{23}^3 +c_{32}^3=c_{22}^2

\end{array}\right\}
$\\

\hline

${\bf A}_{19}^{\alpha}$ & $\not \rightarrow  $ & 

$\begin{array}{llll}
{\bf A}_{16} \\
\end{array}$ 
& 
$\mathcal R=\left\{\begin{array}{lllll}
A_1A_2+A_2A_1\subset A_2, \  A_1A_3+A_3A_1\subset A_3, \\ 
\ A_2A_3+A_3A_2=0, \
c_{31}^3+c_{13}^3=c_{11}^1
\end{array}\right\}
$\\

\hline

${\bf A}_{32}$ & $\not \rightarrow  $ & 

$\begin{array}{llll}
{\bf A}_{12}, \ {\bf A}_{16},  \\
{\bf A}_{17}^{\alpha}, \
{\bf A}_{19}^{\alpha}   \\
\end{array}$ 
& 
$\mathcal R=\left\{\begin{array}{lllll}
A_2^2 \subset A_2,   \ 
c_{23}^2=-c_{32}^2, \
c_{23}^3=-c_{32}^3,\
c_{22}^2=0, \
c_{22}^3=0, \
c_{33}^2=0, \
c_{33}^3=0

\end{array}\right\}
$\\

\hline

\end{longtable}

All necessary degenerations are given below: 
\begin{enumerate}

\item 
${\bf A}_{19}^{\frac{\alpha}{2}} \xrightarrow{ (te_1, -2t^2e_2+e_3, t^3e_2)} {\bf A}_{02}^{\alpha},$ \
${\bf A}_{19}^{\alpha} \xrightarrow{ (e_1+e_2,  e_3, te_2)} {\bf A}_{13}^{\alpha},$ \\
{${\bf A}_{33}^{\beta} \xrightarrow{ (e_1+(\beta-\alpha)e_2-\alpha e_3, t^2e_3, te_2+te_3)}  {\bf A}_{14}^{\alpha,\beta},$}  \
${\bf A}_{19}^{\alpha} \xrightarrow{ (e_1, e_3, te_2)} {\bf A}_{18}^{\alpha},$ \
${\bf A}_{17}^{\frac{-2-t}{2t}} \xrightarrow{ (e_1+e_2, te_2, e_3)} {\bf A}_{29},$ \
${\bf A}_{19}^{-\frac 12 +\frac 1t} \xrightarrow{ ( e_2, te_1, e_3)} {\bf A}_{30},$ \
${\bf A}_{33}^{\frac{2\alpha-t}{2+2t}} \xrightarrow{ ((1+t)e_1-(\alpha+\frac{t}{2})e_3, -t^2e_3, e_2+te_3)} {\bf A}_{31}^{\alpha},$ \
${\bf A}_{32} \xrightarrow{ (e_1-\alpha e_3, t e_3, (\alpha+t^2)e_2+e_3)} {\bf A}_{33}^{\alpha}.$

\item  Thanks to Theorem \ref{geo1}, 
all irreducible components of the variety of $3$-dimensional Kokoris algebras are defined by the following
algebras
${\bf A}_{02}^{\alpha},$ \
${\bf A}_{04},$ \
$ {\bf A}_{24}^{\alpha},$ \  
${\bf A}_{29},$  and
$  {\bf A}_{30}.$
Hence, 
${\bf A}_{01},$
${\bf A}_{03},$
${\bf A}_{05},$
${\bf A}_{06},$
${\bf A}_{07},$
${\bf A}_{08},$
${\bf A}_{09},$
${\bf A}_{10},$
${\bf A}_{11},$
${\bf A}_{20},$
${\bf A}_{21},$
${\bf A}_{22}^\alpha,$
${\bf A}_{23},$
${\bf A}_{25},$
${\bf A}_{26},$
${\bf A}_{27}$ and 
${\bf A}_{28}$ do not give irreducible components in the variety of noncommutative Jordan algebras.

\item Thanks to \cite{gkk}, all irreducible components the variety of $3$-dimensional Jordan algebras are defined by   ${\bf A}_{04}, {\bf A}_{12}, {\bf A}_{14}^{0,0}, {\bf A}_{16}, {\bf A}_{19}^0.$
Hence, ${\bf A}_{15}$ does not give an irreducible component in the variety of noncommutative Jordan algebras.

\end{enumerate}

 
\end{proof}











\end{document}